\theoremstyle{plain}
\newtheorem{thm}{Theorem}[section]
\newtheorem*{thmA}{Theorem A}
\newtheorem*{thmB}{Theorem B}
\newtheorem{cor}[thm]{Corollary}
\newtheorem{lem}[thm]{Lemma}
\theoremstyle{definition}
\theoremstyle{remark}
\newtheorem{remark}[thm]{Remark}
\numberwithin{equation}{section}
\title[Emergence for diffeomorphisms]{Emergence for diffeomorphisms with nonzero Lyapunov exponents}
\date{\today}
\author{Agnieszka Zelerowicz}
\address[Agnieszka Zelerowicz]{Department of Mathematics, University of California, Riverside, CA-92521, USA}
\email{agnieszz@ucr.edu}
\subjclass[2010]{37C45, 37C50, 37D25}
\keywords{Irregular sets, emergence, hyperbolic measures}
\begin{document}

\begin{abstract}
We consider the set of points with high pointwise
emergence for $C^{1+\alpha}$ diffeomorphisms preserving a hyperbolic measure. We find a lower bound on the Hausdorff dimension of this set in terms of unstable Hausdorff dimension of the hyperbolic measure.
If the measure is an SRB, we prove that the set of points with high emergence has full Hausdorff dimension.  
\end{abstract}

\maketitle

\section{Introduction}\label{section:introduction}

The notion of (metric) emergence has been first introduced by Berger in \cite{Berger2017} as a tool to 'evaluate the complexity to
approximate a system by statistics'.
Metric emergence quantifies such phenomenas as the Newhouse phenomenon or KAM phenomenon. The list of research works on metric emergence includes \cite{Berger2020, BBo, BBi, Talebi, CJZ}.
Following similar ideas, Nakano, Kiriki, and Soma \cite{KNS2019}, introduced a concept of pointwise emergence (see (\ref{eq:0403d}) for the definition) to measure complexity of irregular orbits. 
We recall that
a point $x$ is said to be \emph{irregular}, if the sequence
 \begin{equation}\label{eqn:bir-average}
\delta _x^n = \frac{1}{n} \sum _{j=0}^{n-1} \delta _{f^j (x)}, \quad n\geq 1,
\end{equation}
 does not converge (here $\delta_x$ denotes the atomic measure at the point $x$).

By the Birkhoff Ergodic Theorem, the set of irregular points has zero measure with respect to any invariant measure. On the other hand, for many dynamical systems this set is known to be large from different points of view -  to have full Hausdorff dimension and full entropy.  
Pesin and Pitskel' \cite{PP84}   obtained the first result of this type, proving that the set of irregular points for the full shift has full entropy and full Hausdorff dimension. 
This result was later extended to topologically mixing subshifts of finite type in \cite{BS2000}, 
to graph directed Markov systems in \cite{FP2011},
to continuous maps with specification property in \cite{CKS2005} (see also \cite{Thompson2010}), 
and  to continuous maps with almost specification property
in \cite{Thompson2012}.
Other works related to the study of the set of irregular points include \cite{HK1990, Ruelle2001, Takens2008, ABC2011, CZZ2011, CTV2015, Tian2017, BV2017, KS2017, AP2019, BKNSR2020, Yang2020}.

In this paper we consider irregular points with high complexity, that is points for which (\ref{eqn:bir-average}) oscillates between infinitely many ergodic measures.
This corresponds to the pointwise emergence being super-polynomial and we will also refer to it as \textit{high} (for the definition see (\ref{eqn:high}) in the next section).

As pointed out in \cite{Berger2017}, there is a consensus among computer scientists that super-polynomial algorithms are impractical. From that perspective dynamical systems with high metric emergence are not feasible to be studied numerically.
The set of points with high (pointwise) emergence can be then considered as statistically very complex (see also \cite{Berger2020,BBo} for other motivations to study high emergence).

It was proved by the author of this note and Nakano in \cite{NakZ} that the set of points with high emergence for topologically mixing subshifts of finite type has full entropy, full Hausdorff dimension, as well as full pressure for any H\"older continuous function.
The aim of this paper is to find a lower bound for the Hausdorff dimension of the set of points with high emergence for $C^{1+\alpha}$ diffeomorphisms preserving a hyperbolic measure (see Theorem A).
This is done by finding an appropriate approximation by Horseshoes (Theorem B) and then applying the result from \cite{NakZ}.   
The first construction of a horseshoe approximating the support of a hyperbolic measure was presented by Katok in \cite{Katok1980} and since then there have been many works establishing different variants of this classical result.
Examples include \cite{ACW, Chung, Gel, Luz, Mendoza, Mis, Per, Przyt, Sanchez, Yang} but this list is far from complete.
For our purposes we need a horseshoe maximizing the unstable dimension. 
This was done in \cite{Mendoza} for diffeomorphisms on surfaces preserving an ergodic $SRB$ measure and in \cite{Sanchez} this result has been extended to higher dimensions.
Theorem B of this paper is a generalization of  \cite{Mendoza} and  \cite{Sanchez} to general hyperbolic measures and as such, we believe that it may be of independent interest.


\subsection{Structure of the paper}
In Section \ref{sec:setting} we introduce the setting and state the main results - Theorem A and Theorem B. 
We prove Theorem B in Section \ref{sec:Kat}. The proof of Theorem A occupies Sections \ref{sec:emer} and \ref{sec:dim}.

\section*{Acknowledgments}
The author would like to thank Dmitry Dolgopyat for suggesting the problem and for helpful remarks.

\section{The setting and the main results}\label{sec:setting}

Consider a $C^{1+\alpha}$ diffeomorphism $f$ ($\alpha>0$) of a smooth compact Riemannian manifold $M$ without boundary and $\mu$ - an $f$-invariant hyperbolic probability measure.
Let $0 < \chi_1<\chi_2<\ldots<\chi_l$ denote distinct positive Lyapunov exponents of $f$ with multiplicities $n_1, n_2, \ldots, n_l$.
The following statement was proved by Ledrappier and Young in \cite{LY} (see also \cite[Theorem 14.1.18]{pesRed}).

\begin{thm}\label{thm:LY}
There are numbers $d_1, d_2, \ldots, d_l$ such that:
\begin{enumerate}
\item $0\leq d_i\leq n_i$,
\item $\sum_{i=1}^l d_i = d_{\mu}^u$, and
\item $h_{\mu}(f)= \sum_{i=1}^l  d_i ~ \chi_i$.
\end{enumerate}
\end{thm}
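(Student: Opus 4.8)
The plan is to recall the argument of Ledrappier and Young \cite{LY}, which splits both the entropy $h_\mu(f)$ and the unstable dimension $d_\mu^u$ additively along a flag of intermediate unstable manifolds. First I would invoke nonuniform hyperbolicity theory (Pesin theory, available in the $C^{1+\alpha}$ category): on a set of full $\mu$-measure one has the Oseledets splitting $T_xM = E^s(x)\oplus\bigoplus_{i=1}^{l}E_{\chi_i}(x)$ together with Pesin charts in which $f$ acts on the $E_{\chi_i}$-block by roughly $e^{\chi_i}$. From these charts one builds, for $k=1,\dots,l+1$, an intermediate unstable lamination $W^{(k)}$ whose leaf through $x$ is tangent to $\bigoplus_{i\ge k}E_{\chi_i}(x)$ (the span of the $l-k+1$ fastest directions), so that $W^{(l+1)}(x)=\{x\}\subset W^{(l)}(x)\subset\cdots\subset W^{(1)}(x)=W^{u}(x)$ and $fW^{(k)}(x)=W^{(k)}(fx)$.

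To each level I would attach two numbers. On the dimension side: by Rokhlin disintegration $\mu$ has conditional measures $\mu_x^{(k)}$ on the leaves of $W^{(k)}$, and a key step is to show that these are exact dimensional, so that $\delta_k := \lim_{r\to 0}\log\mu_x^{(k)}(B^{(k)}(x,r))/\log r$ exists, is $\mu$-a.e. constant, and satisfies $0=:\delta_{l+1}\le\delta_l\le\cdots\le\delta_1=d_\mu^u$ (the monotonicity because $W^{(k+1)}\subset W^{(k)}$). On the entropy side: following Ledrappier--Strelcyn one fixes an increasing measurable partition $\xi^{(k)}$ subordinate to $W^{(k)}$ and lets $h_k$ be the resulting entropy of $f$ along $W^{(k)}$; then $h_{l+1}=0$, and $h_1=h_\mu(f)$, the latter being where hyperbolicity of $\mu$ enters --- the entropy is carried entirely by the unstable lamination.

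The heart of the proof is the identity
\[
h_k-h_{k+1}=\chi_k\,(\delta_k-\delta_{k+1}),\qquad k=1,\dots,l,
\]
together with the inequalities $0\le\delta_k-\delta_{k+1}\le n_k$. To prove the identity one works in Pesin charts and compares, for a $\mu$-typical $x$ and large $n$, the $\mu_x^{(k)}$-measure of a dynamical ball along $W^{(k)}$ with its Euclidean radius: the Shannon--McMillan--Breiman theorem for $\xi^{(k)}$ makes the measure comparable to $e^{-nh_k}$, while in the charts $f^n$ dilates the direction $E_{\chi_k}$ that is added at level $k$ by $\approx e^{n\chi_k}$ relative to the sub-leaf $W^{(k+1)}$, so the relevant radius is $\approx e^{-n\chi_k}$; exact dimensionality of $\mu_x^{(k)}$ and of its conditionals transverse to $W^{(k+1)}$ then converts this comparison into the displayed equality, the transverse dimension being exactly $\delta_k-\delta_{k+1}$. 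The bound $\delta_k-\delta_{k+1}\le n_k$ holds because $E_{\chi_k}$ spans an $n_k$-dimensional subspace, so a measure transverse to $W^{(k+1)}$ inside $W^{(k)}$ cannot have larger dimension; $\delta_k-\delta_{k+1}\ge 0$ is the monotonicity above.

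Setting $d_i:=\delta_i-\delta_{i+1}$ one then has $0\le d_i\le n_i$, $\sum_{i=1}^{l}d_i=\delta_1-\delta_{l+1}=d_\mu^u$, and, telescoping,
\[
h_\mu(f)=h_1=\sum_{k=1}^{l}(h_k-h_{k+1})=\sum_{k=1}^{l}\chi_k(\delta_k-\delta_{k+1})=\sum_{i=1}^{l}d_i\,\chi_i,
\]
which is the assertion. I expect the two genuine obstacles to be (i) the exact dimensionality of each $\mu_x^{(k)}$ and (ii) the inductive identity above: both rest on delicate distortion and tempering estimates for the nonlinear map in Pesin charts, and on a Borel--Cantelli or martingale argument that passes between dynamical balls and genuine metric balls --- this is the technical core of Ledrappier--Young's Part II. A final, by now routine, point is to verify that the construction of the laminations $W^{(k)}$ and the identity $h_1=h_\mu(f)$ survive in the $C^{1+\alpha}$ category rather than $C^{2}$, which is precisely the generality of the reference \cite{pesRed}.
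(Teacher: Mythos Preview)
The paper does not prove this statement at all: it quotes it as the Ledrappier--Young theorem, citing \cite{LY} and \cite[Theorem~14.1.18]{pesRed}, and then uses it as a black box. So there is no ``paper's own proof'' to compare your proposal against.

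Your outline is a faithful sketch of the Ledrappier--Young argument itself: the flag of intermediate unstable laminations $W^{(k)}$, the conditional entropies $h_k$ and conditional dimensions $\delta_k$, the key identity $h_k-h_{k+1}=\chi_k(\delta_k-\delta_{k+1})$, and the telescoping to recover both $h_\mu(f)$ and $d_\mu^u$. You have also correctly flagged the two hard points (exact dimensionality of the leaf conditionals and the inductive entropy/dimension identity) and the $C^{1+\alpha}$ versus $C^2$ issue. For the purposes of this paper, however, none of this is needed beyond the citation; if you wish to include a proof, what you have written is an accurate high-level summary of \cite{LY}, but be aware that turning it into a self-contained argument would require reproducing a substantial portion of Ledrappier--Young's Part~II.
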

Here $ d_{\mu}^u$ denotes the unstable dimension of the measure and can be obtained \cite[Theorem 14.1.6]{pesRed} as the limit

$$  \lim_{r\to 0} \frac{\log \mu^u(x)(B^u(x,r))}{\log r}   \text{ for $\mu$- almost every }x\in M,$$

where $ \mu^u(x)$ is the conditional measure on the local unstable manifold $V^u(x)$ and $B^u(x,r):= B(x,r)\cap V^u(x).$

For the rest of this paper we assume that: 

\begin{enumerate}
\item[\textbf{A1.}]  there exists a number $0 < d \leq 1$ such that    $$  d = \frac{d_1}{n_1} =   \frac{d_2}{n_2}  = \ldots  =  \frac{d_l}{n_l}.$$
\item[\textbf{A2.}] $$\frac{\chi_l}{\chi_1}(1-d)<1.$$ 
\end{enumerate}

In particular our results apply if $\mu$ is an $SRB$ measure, in which case $d=1$. Another example is  
 $\mu$ being \textit{u-conformal}, meaning that there is only one positive Lyapunov exponent $\chi^+$ of multiplicity $n^+\geq 1$.
In the latter case the statement of Theorem \ref{thm:LY} reduces to the formula
$h_{\mu}(f) =  d_{\mu}^u  ~ \chi^{+}$.

The \emph{pointwise emergence} $\mathscr E_{f,x}(\epsilon ) $  at scale $\epsilon >0$ at $x\in M$ (with respect to $f$) is defined as
\begin{multline}\label{eq:0403d}
\mathscr E_{f,x}(\epsilon ) = \min \Big\{ N \in \mathbb N \mid  \text{there exists   $\{ \mu _j\} _{j=1}^N \subset \mathcal P(M)$ such that} \\
\limsup _{n\to \infty}  \min _{1\leq j\leq N}  d_W \left( \sum_{k=0}^{n-1}\delta _{f^k(x)} , \mu _j \right) \leq \epsilon \Big \},
\end{multline}

where:
\begin{itemize}
\item $\mathcal P(M)$ denotes the set of Borel probability measures on $M$,
\item $d_W$ is the first Wasserstein metric:\\
For $j=1,2$, let $\pi_j :M\times M\to M $ be the canonical projection to the $j$-th coordinate. 
 Let $\Pi (\mu, \nu )$ be the set of probability measures $\mathbb{P}$ on $M\times M$ such that $\mathbb{P}\circ \pi_1^{-1} =\mu$ and $\mathbb{P}\circ \pi_2^{-1} =\nu$. 
 The first Wasserstein metric $d_W $ is defined as
 \[
d_W(\mu ,\nu )= \inf _{\mathbb{P} \in \Pi (\mu , \nu )} \int _{M\times M} d(x,y) d\mathbb{P} (x,y) \quad \text{for $ \mu  ,\nu \in \mathcal P(M)$} ,
 \]
\item $\delta_x$ denotes the atomic measure at the point $x$.
\end{itemize}

The pointwise emergence at $x\in M$ is called \emph{super-polynomial} (or \emph{high})
 if 
\begin{equation}\label{eqn:high}
\limsup _{\epsilon \to 0}  \frac{\log \mathscr E_{f,x}(\epsilon ) }{-\log \epsilon } =\infty .
\end{equation}

The following was shown in \cite[Theorem 1.1]{NakZ}.
\begin{thm}\label{thm:mainNakZ}
Let $X $ be a topologically mixing subshift of finite type of $  \{ 1, 2, \ldots , \kappa\} ^{\mathbb N}$ with $\kappa\geq 2$. Let $\sigma:X \to X$ be the left-shift operation on $X$.  
 Let $\mathcal{E}_{\sigma}$ be the set of points $x\in X$ satisfying
 \begin{equation*}
 \lim _{\epsilon \to 0}  \frac{\log \mathscr E_{\sigma,x}(\epsilon ) }{-\log \epsilon } =\infty.
\end{equation*}
Then, 
\[
h_{\mathrm{top}} (\sigma_{|\mathcal{E}_{\sigma}}) =h_{\mathrm{top}} (\sigma_{|X})  \quad \text{and } \quad \mathrm{dim}_{H} (\mathcal{E}_{\sigma}) =\mathrm{dim}_{H} (X) .
\]
In addition, for any H\"older continuous function $\varphi$, we have that $P(\sigma_{|\mathcal{E}_{\sigma}}, \varphi)=P(\sigma_{|X},\varphi)$.
That is, the set of points with high emergence carries full topological pressure.
\end{thm}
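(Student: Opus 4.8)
The plan is to produce a Cantor subset $F\subseteq\mathcal E_\sigma$ which already has full topological pressure (hence, taking $\varphi\equiv0$, full entropy and full Hausdorff dimension), by a Moran-type (``$W$-measure'') construction in which the empirical averages $\delta_x^n=\frac1n\sum_{k=0}^{n-1}\delta_{\sigma^k x}$ are forced to oscillate among a carefully chosen family of invariant measures. The point is to pick that family so that, \emph{simultaneously}, (i) at every small scale it contains a $d_W$-separated chunk of super-polynomially many measures — this is what makes the pointwise emergence super-polynomial — and (ii) every member of it has free energy arbitrarily close to $P(\sigma|_X,\varphi)$ — this is what keeps the pressure (and dimension) of $F$ maximal. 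The oscillation mechanism is the one of Barreira--Schmeling \cite{BS2000}, and the full argument is carried out in \cite{NakZ}.

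\emph{The oscillation targets.} Let $\mu_\varphi$ be the unique equilibrium state of $\varphi$, so $h_{\mu_\varphi}(\sigma)+\int\varphi\,d\mu_\varphi=P(\sigma|_X,\varphi)$ (the measure of maximal entropy when $\varphi\equiv0$). Since $\mathcal P_\sigma(X)$ is an infinite-dimensional Choquet simplex it has super-polynomial $d_W$-covering number: for every $\eta>0$ there is an $\eta$-separated family in $\mathcal P_\sigma(X)$ of cardinality exceeding any fixed power of $1/\eta$ (e.g.\ periodic measures on orbits of period $\approx\log(1/\eta)$). Fix $\epsilon_m\downarrow0$ and $\eta_m\downarrow0$ with, say, $\epsilon_m=\eta_m^2$, and for each $m$ a $d_W$-$(\epsilon_m/\eta_m)$-separated family $\{\rho^{(m)}_1,\dots,\rho^{(m)}_{N_m}\}\subseteq\mathcal P_\sigma(X)$ with $N_m$ super-polynomial in $1/\epsilon_m$. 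Put $\nu^{(m)}_j=(1-\eta_m)\mu_\varphi+\eta_m\,\rho^{(m)}_j$. Because entropy and $\int\varphi\,d(\cdot)$ are affine and $d_W\big((1-t)\alpha+t\beta_1,(1-t)\alpha+t\beta_2\big)=t\,d_W(\beta_1,\beta_2)$, the $\nu^{(m)}_j$ are pairwise $\epsilon_m$-separated yet all satisfy $h_{\nu^{(m)}_j}(\sigma)+\int\varphi\,d\nu^{(m)}_j\ge P(\sigma|_X,\varphi)-C\eta_m$.

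\emph{The skeleton and high emergence.} Enumerate the pairs $(m,j)$, $1\le j\le N_m$, as $(m_1,j_1),(m_2,j_2),\dots$ so that for each fixed $m$ every $j\le N_m$ occurs, and, using mixing of $X$ to glue admissible words by bounded connectors, form the words $B_1B_2B_3\cdots$ where $B_k$ is $(\nu^{(m_k)}_{j_k},\delta_k)$-generic with $\delta_k\downarrow0$ and with $|B_k|$ dominating $|B_1|+\cdots+|B_{k-1}|$; a generic word of the required length exists for $\nu^{(m)}_j$ by concatenating a $\mu_\varphi$-generic subword of relative length $1-\eta_m$ with a $\rho^{(m)}_j$-generic subword of relative length $\eta_m$. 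Let $F$ be the set of all points obtained by letting each $B_k$ range over all admissible generic choices. For every $x\in F$, block-length domination makes $d_W(\delta_x^{n},\nu^{(m_k)}_{j_k})<\epsilon_{m_k}/3$ at the end of $B_k$, so the accumulation set of $(\delta_x^n)_n$ contains all $\nu^{(m)}_j$. If fewer than $N_m$ measures realised the definition of $\mathscr E_{\sigma,x}(\epsilon_m/3)$, each $\nu^{(m)}_j$ would be within $\epsilon_m/3$ of one of them, so two distinct $\nu^{(m)}_i,\nu^{(m)}_{i'}$ would lie within $2\epsilon_m/3<\epsilon_m$ of each other — impossible; hence $\mathscr E_{\sigma,x}(\epsilon_m/3)\ge N_m$ and $\log\mathscr E_{\sigma,x}(\epsilon_m/3)/(-\log(\epsilon_m/3))\to\infty$. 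Since $\mathscr E_{\sigma,x}(\cdot)$ is nonincreasing and $\epsilon_m=\eta_m^2$ keeps $-\log\epsilon_m/-\log\epsilon_{m+1}$ bounded below, the full limit defining $\mathcal E_\sigma$ equals $+\infty$, i.e.\ $F\subseteq\mathcal E_\sigma$.

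\emph{Size of $F$, and the main obstacle.} Equip $F$ with the Moran measure $\lambda$ distributing mass on block $B_k$ uniformly-with-$\varphi$-weights over the $(\nu^{(m_k)}_{j_k},\delta_k)$-generic words; this is a standard Bowen/Katok-type construction. Block-length domination makes the running average $-\frac1n\log\lambda([x_0\cdots x_{n-1}])+\frac1n S_n\varphi(x)$ coincide up to $o(1)$ with the free energy of the current block, and $h_{\nu^{(m)}_j}+\int\varphi\,d\nu^{(m)}_j\to P(\sigma|_X,\varphi)$ as $m\to\infty$, while the bounded connectors are negligible, so
\[
\liminf_{n\to\infty}\Big(-\tfrac1n\log\lambda([x_0\cdots x_{n-1}])+\tfrac1n S_n\varphi(x)\Big)\ \ge\ P(\sigma|_X,\varphi)\qquad\text{for $\lambda$-a.e.\ }x .
\]
The generalized pressure distribution principle then gives $P(\sigma|_F,\varphi)\ge P(\sigma|_X,\varphi)$, and $\le$ is automatic from $F\subseteq X$; taking $\varphi\equiv0$ yields $h_{\mathrm{top}}(\sigma|_{\mathcal E_\sigma})=h_{\mathrm{top}}(\sigma|_X)$, and via the standard relation $\dim_H(\cdot)=h_{\mathrm{top}}(\sigma|_\cdot)/\log\kappa$ for the metric $d(x,y)=\kappa^{-\min\{k:\,x_k\ne y_k\}}$ also $\dim_H(\mathcal E_\sigma)=\dim_H(X)$, with Step 3 supplying the reverse containments needed for equality. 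The hard part is precisely the reconciliation in Step 1 together with the coordination of all the scales — $\epsilon_m$, $\eta_m$, $N_m$, the generic tolerances $\delta_k$, the super-exponentially growing block lengths, and the connector lengths — so that the $d_W$-separation of the targets survives, the running free-energy averages stay near the maximum at \emph{all} times $n$ (not merely at block endpoints), and the emergence exponent diverges along \emph{all} scales $\epsilon\to0$; this bookkeeping, and the pressure-distribution estimate for the highly non-uniform Moran set $F$, is the content of \cite{NakZ}.
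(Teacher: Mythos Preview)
The paper does not prove this theorem at all: it is quoted verbatim from \cite[Theorem~1.1]{NakZ} and used as a black box in the proof of Theorem~A (via the pressure identity $P(\sigma^+_{|E^+},\phi^+)=P(\sigma^+,\phi^+)$ in Section~\ref{sec:dim}). There is therefore no in-paper proof to compare your proposal against.

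Your sketch is a faithful outline of the Barreira--Schmeling/Moran construction that underlies \cite{NakZ}, and you yourself defer the delicate bookkeeping (coordination of the scales $\epsilon_m,\eta_m,N_m,\delta_k$, block-length growth, and the pressure-distribution estimate on the non-uniform Cantor set) to that reference, so in effect your ``proof'' and the paper's ``proof'' coincide: both point to \cite{NakZ}. One small caution: the identity $\dim_H(\cdot)=h_{\mathrm{top}}(\sigma|_{\cdot})/\log\kappa$ you invoke at the end holds for the specific ultrametric $d(x,y)=\kappa^{-\min\{k:x_k\ne y_k\}}$ and Bowen's entropy for noncompact sets, but is not the route taken in \cite{NakZ}, where the dimension statement is obtained directly from the pressure statement via Bowen's equation rather than by specializing $\varphi\equiv0$; your shortcut is fine for the full shift but for a proper SFT one should check that the chosen metric really gives $\dim_H X=h_{\mathrm{top}}(\sigma|_X)/\log\kappa$.
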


The aim of this note is to use the above result to obtain the following.

\begin{thmA}\label{thm:main}
Let $f: M \to M$ be a $C^{1+\alpha}$ diffeomorphism ($\alpha>0$) of a smooth compact Riemannian manifold $M$ without boundary and $\mu$ - an $f$-invariant hyperbolic probability measure
satisfying Conditions \textbf{A1.} and \textbf{A2.}

 Let $\mathcal{E}_{f}$ be the set of points $x\in M$ satisfying
 \begin{equation*}
 \lim _{\epsilon \to 0}  \frac{\log \mathscr E_{f,x}(\epsilon ) }{-\log \epsilon } =\infty.
\end{equation*}
Then,

$$  \dim_H(\mathcal{E}_f) \geq  \dim(E^s) +  \left( 1   - (1-d)\frac{\chi_l}{\chi_1}   \right)\dim E^u ,   $$
where $E^s$ and $E^u$ denote the stable and unstable subbundles respectively.
\end{thmA}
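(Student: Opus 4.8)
The plan is to deduce Theorem A from the symbolic statement of Theorem \ref{thm:mainNakZ} through the horseshoe approximation of Theorem B, keeping track of two contributions to the dimension: the whole $\dim E^s$ from the stable direction (we will use entire local stable manifolds, which costs nothing) and the quantity $\bigl(1-(1-d)\chi_l/\chi_1\bigr)\dim E^u$ from the unstable direction, the deficit with respect to $d_\mu^u=d\dim E^u$ being forced by the gap between the fastest and slowest rates of $Df^{\pm n}$ on the non-conformal unstable bundle of the horseshoe. Fix $\eta>0$. Theorem B provides a topologically mixing horseshoe $\Lambda=\Lambda_\eta$ for $f$ with a Markov coding $\pi\colon\widehat X\to\Lambda$ (a bi-H\"older conjugacy, $\widehat X$ a topologically mixing two-sided subshift of finite type) and, inside a small rectangle, the usual product structure, so that the local unstable slice $\Lambda\cap V^u(p)$ is a bi-H\"older image, under the induced parametrization, of the one-sided factor $X^+$ of $\widehat X$, and moreover $\dim_H\!\bigl(\Lambda\cap V^u(p)\bigr)\ge\bigl(1-(1-d)\chi_l/\chi_1\bigr)\dim E^u-\eta$. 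The proof then has an ``emergence'' part (Section \ref{sec:emer}) and a ``dimension'' part (Section \ref{sec:dim}).

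\emph{Emergence.} Two soft facts are needed. First, super-polynomial emergence is preserved by a bi-H\"older conjugacy: pushing a measure forward by a H\"older map changes the Wasserstein distance only by a H\"older factor (apply Jensen's inequality to the transport plans), so $\mathscr E$ is altered only through a substitution $\epsilon\mapsto C\epsilon^{\theta}$, which does not affect the limit in \eqref{eqn:high}; one also checks, by projecting approximating measures to $\Lambda$, that replacing $\mathcal P(M)$ by $\mathcal P(\Lambda)$ in the definition of emergence costs only such a substitution. Second, the one-sided factor map $q\colon\widehat X\to X^+$ is $1$-Lipschitz and sends the forward empirical measures of $\hat\omega$ to their $q$-pushforwards, whence $\mathscr E_{\sigma,q(\hat\omega)}(\epsilon)\le\mathscr E_{\hat\sigma,\hat\omega}(\epsilon)$, so that high emergence of $q(\hat\omega)$ forces high emergence of $\hat\omega$. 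Writing $\mathcal E_\sigma\subseteq X^+$ for the symbolic high-emergence set, these facts give $\pi\bigl(q^{-1}(\mathcal E_\sigma)\bigr)\subseteq\mathcal E_f$; inside the rectangle, $q^{-1}(\mathcal E_\sigma)$ corresponds under the product structure to $\mathcal E_\sigma$ times the past coordinates, so its image contains $A^u$, the image of $\mathcal E_\sigma$ in $\Lambda\cap V^u(p)$. Finally, if $z\in V^s(y)$ then $\tfrac{1}{n}\sum_{k<n}d(f^ky,f^kz)\to0$, hence $d_W(\delta_y^n,\delta_z^n)\to0$ and $\mathscr E_{f,y}(\epsilon)=\mathscr E_{f,z}(\epsilon)$ for all $\epsilon$, so $\mathcal E_f$ is saturated by local stable manifolds and $\mathcal E_f\supseteq\Gamma:=\bigcup_{y\in A^u}V^s(y)$.

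\emph{Dimension.} The set $\Gamma$ is a disjoint union of smooth $\dim E^s$-dimensional disks indexed by $A^u\subseteq V^u(p)$, and $V^u(p)$ is a transversal to this lamination, so the product structure of the horseshoe gives $\dim_H\Gamma\ge\dim E^s+\dim_H A^u$; it remains to show $\dim_H A^u\ge\bigl(1-(1-d)\chi_l/\chi_1\bigr)\dim E^u-O(\eta)$. The Hausdorff dimension of the unstable slice of a $C^{1+\alpha}$ horseshoe admits a thermodynamic lower bound: it is at least the value $s^*$ at which a Bowen-type (in general subadditive) equation, built from the pressures of the H\"older potentials associated with the singular-value functions of the unstable Jacobians $Df^n|_{E^u}$, is satisfied, and by Theorem B $s^*\ge\bigl(1-(1-d)\chi_l/\chi_1\bigr)\dim E^u-\eta$. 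Because Theorem \ref{thm:mainNakZ} asserts that $\mathcal E_\sigma$ carries the full topological pressure of \emph{every} H\"older potential, each of these pressures is unchanged on $\mathcal E_\sigma$, so the same value $s^*$ is attained there; running the Moran-type construction of \cite{NakZ} with the corresponding equilibrium structure then produces a subset of $A^u$ together with a (non-invariant) mass distribution witnessing $\dim_H A^u\ge s^*-O(\eta)$ via the mass-distribution principle. Letting $\eta\to0$ yields $\dim_H(\mathcal E_f)\ge\dim E^s+\bigl(1-(1-d)\tfrac{\chi_l}{\chi_1}\bigr)\dim E^u$.

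The hard part is Theorem B, proved in Section \ref{sec:Kat}: one has to perform a Katok-type hyperbolic approximation of $\mu$ that simultaneously maximizes the Hausdorff dimension of the unstable slices --- extending the surface $SRB$ theorem of \cite{Mendoza} and the higher-dimensional version of \cite{Sanchez} to an arbitrary hyperbolic $\mu$ --- where Condition \textbf{A1} allows the construction to match the Ledrappier--Young exponents $d_i$ blockwise and Condition \textbf{A2} guarantees that the unstable dimension so obtained is still positive, the loss $(1-d)\chi_l/\chi_1$ being intrinsic to the non-conformal geometry. A second, more technical, point is that $\mathcal E_f$ is null for every invariant measure, so the lower bound for $\dim_H A^u$ cannot be read off any natural conditional measure and must instead be transplanted from \cite{NakZ}; it is the full-pressure (not merely full-dimension) content of Theorem \ref{thm:mainNakZ} that makes this transplant compatible with the non-conformal thermodynamic formalism underlying the dimension of the horseshoe.
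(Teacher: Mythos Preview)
Your emergence section is essentially the paper's Section~\ref{sec:emer}: H\"older transfer of super-polynomial emergence through the coding, projection to the one-sided shift, and saturation along stable leaves to pick up $\dim E^s$. That part is fine and matches.

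The dimension section, however, diverges from the paper and is where your proposal is underspecified. You invoke a ``Bowen-type (in general subadditive) equation built from the pressures of the H\"older potentials associated with the singular-value functions'' and say that Theorem~B provides its root $s^*$, and then that the full-pressure statement of Theorem~\ref{thm:mainNakZ} for \emph{every} H\"older potential lets you run a Moran construction to get $\dim_H A^u\ge s^*$. Two problems. First, Theorem~B does not hand you $s^*$: it gives (4) the pressure of the \emph{single} H\"older potential $-d\log|\det Df|_{E^u}|$ is $>-\delta$, and (5) an ergodic $\nu$ with large $d^u_\nu$; neither is the root of a subadditive Bowen equation for singular-value potentials, and in the non-conformal case the dimension formula for unstable slices is not simply a Bowen root. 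Second, and more seriously, ``full pressure on $\mathcal E_\sigma$ in the symbolic metric'' does not by itself yield ``large Hausdorff dimension of $\pi^+(\mathcal E_\sigma)$ in the Riemannian metric'' when $E^u$ carries several exponents: the cylinders $[\,x_0,\dots,x_{m-1}\,]$ map to sets that are highly eccentric in $V^u$, and a Moran/mass-distribution argument run at the symbolic level sees only the coarsest contraction rate. Bridging that gap is exactly the content of the paper's Section~\ref{sec:dim}.

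What the paper actually does is more concrete and uses only the \emph{one} potential $\phi=-d\log|\det Df|_{E^u}|$. It pulls $\phi$ over to a H\"older $\phi^+$ on $\mathcal X^+$, applies Theorem~\ref{thm:mainNakZ} once to get $P(\sigma^+|_{E^+},\phi^+)=P(\sigma^+,\phi^+)>-\delta L$, and then runs a direct covering argument on $V^u$: each Bowen ball $B_m(x,s)$ is compared with a product $B^1_m\times\cdots\times B^l_{m_l}$ of ``intermediate'' unstable balls along the filtration $V^1\supset\cdots\supset V^l$ associated to the exponents $\chi_1<\cdots<\chi_l$, the $m_i$ chosen so that all factors have comparable diameter $\asymp e^{-\chi_1 m}$; counting how many small products are needed to cover one large one produces the correction factor $\prod_i e^{(\chi_i-\chi_1)n_i m}$, and combining this with the pressure inequality gives $\dim_H\pi^+(E^+)\ge (d-d')\dim E^u$ with $d'$ arbitrarily close to $(1-d)\sum_i(\chi_i-\chi_1)n_i/(\chi_1\dim E^u)\le(1-d)(\chi_l-\chi_1)/\chi_1$. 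So the loss $(1-d)\chi_l/\chi_1$ enters through this explicit cover-refinement count, not through an abstract subadditive formalism. Your outline would become a proof once you replace the appeal to a generic Bowen equation by this concrete covering step.
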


In the two special cases we obtain the following corollaries.

\begin{cor}
Let $f: M \to M$ be a $C^{1+\alpha}$ diffeomorphism of a smooth compact Riemannian manifold $M$ without boundary 
preserving a hyperbolic $SRB$ measure. Then,
$$  \dim_H(\mathcal{E}_f) = \dim(M) .   $$
\end{cor}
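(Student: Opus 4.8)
The plan is to reduce the corollary to Theorem A by observing that a hyperbolic SRB measure is precisely one for which the Ledrappier--Young exponents are maximal. First I would recall the characterization of SRB measures: $\mu$ is an SRB measure if and only if the conditional measures $\mu^u(x)$ on local unstable manifolds are absolutely continuous with respect to the Riemannian volume on $V^u(x)$, equivalently (by the Ledrappier--Young theory quoted in Theorem \ref{thm:LY}) if and only if the Pesin entropy formula holds with equality, $h_\mu(f) = \sum_{i=1}^l n_i \chi_i$. Comparing this with part (3) of Theorem \ref{thm:LY} and using the bounds $0 \le d_i \le n_i$ from part (1), we are forced to have $d_i = n_i$ for every $i$, and hence $d := d_i/n_i = 1$. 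Thus Condition \textbf{A1} holds with $d = 1$, and Condition \textbf{A2} reads $\frac{\chi_l}{\chi_1}(1-1) = 0 < 1$, which is automatic. So $f$ and $\mu$ satisfy all the hypotheses of Theorem A.

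Having verified the hypotheses, I would apply Theorem A directly. With $d = 1$ the lower bound it provides becomes
\[
\dim_H(\mathcal{E}_f) \ \geq\ \dim(E^s) + \Bigl(1 - (1-1)\tfrac{\chi_l}{\chi_1}\Bigr)\dim E^u \ =\ \dim(E^s) + \dim(E^u).
\]
Since $\mu$ is hyperbolic it has no zero Lyapunov exponents, so the Oseledets decomposition $T_xM = E^s(x) \oplus E^u(x)$ holds for $\mu$-a.e.\ $x$ with $\dim E^s(x) + \dim E^u(x) = \dim M$; hence the right-hand side equals $\dim M$. For the reverse inequality there is nothing to prove: $\mathcal{E}_f \subseteq M$, so $\dim_H(\mathcal{E}_f) \le \dim_H(M) = \dim M$. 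Combining the two bounds yields $\dim_H(\mathcal{E}_f) = \dim M$.

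The only real content beyond invoking Theorem A is the identification $d = 1$ for SRB measures, so the main (and quite mild) obstacle is to phrase the characterization of SRB measures in a way compatible with the normalization of Theorem \ref{thm:LY}: one must make sure ``SRB'' here means the absolutely-continuous-conditionals notion, for which the equalities $d_i = n_i$ are exactly the Ledrappier--Young rigidity statement, rather than merely the existence of a physical measure, and one must confirm that the hyperbolic SRB case does indeed fall under the assumptions of Theorem A (which it does, as checked above). Everything else is the trivial upper bound together with the dimension count coming from the absence of zero exponents.
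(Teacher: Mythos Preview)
Your proposal is correct and follows exactly the route the paper intends: the paper simply remarks that for an SRB measure one has $d=1$, so Conditions \textbf{A1} and \textbf{A2} are trivially satisfied and Theorem~A yields $\dim_H(\mathcal{E}_f)\ge \dim E^s+\dim E^u=\dim M$, while the upper bound is immediate from $\mathcal{E}_f\subset M$. Your write-up just makes explicit the Ledrappier--Young rigidity argument forcing $d_i=n_i$, which the paper leaves implicit.
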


\begin{cor}
Let $f: M \to M$ be a $C^{1+\alpha}$ diffeomorphism of a smooth compact Riemannian manifold $M$ without boundary and $\mu$ - an $f$-invariant hyperbolic probability measure
with one positive Lyapunov exponent $\chi^+$ of multiplicity $n^+\geq 1$.
Then, 
$$  \dim_H(\mathcal{E}_f) \geq  \dim(E^s) +  d_{\mu}^u  .   $$
\end{cor}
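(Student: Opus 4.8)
The plan is to obtain the corollary as a direct specialization of Theorem A, so the only work is to check that Conditions \textbf{A1} and \textbf{A2} hold in the u-conformal setting and then to simplify the resulting inequality. Throughout we assume $h_\mu(f)>0$, equivalently $d_\mu^u>0$; otherwise $d_\mu^u=0$, no constant $d$ as in \textbf{A1} exists, and that degenerate situation falls outside the scope of Theorem A.

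In the u-conformal case there is a single positive Lyapunov exponent, i.e.\ $l=1$, $\chi_1=\chi_l=\chi^+$ and $n_1=n^+$, and the numbers supplied by Theorem \ref{thm:LY} reduce to one number $d_1$ with $0<d_1\le n^+$, $d_\mu^u=d_1$ and $h_\mu(f)=d_1\chi^+$. Setting $d:=d_1/n^+\in(0,1]$, Condition \textbf{A1} holds automatically since there is only the single ratio $d_1/n_1$ to match, while Condition \textbf{A2} becomes $\tfrac{\chi_l}{\chi_1}(1-d)=1-d<1$, which holds because $d>0$. Hence Theorem A applies to $f$ and $\mu$.

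It then remains to read off the conclusion of Theorem A with $\chi_l/\chi_1=1$: the coefficient of $\dim E^u$ collapses to $1-(1-d)=d$, so
\[
\dim_H(\mathcal E_f)\ \ge\ \dim(E^s)+d\cdot\dim E^u .
\]
Finally, at $\mu$-almost every point the unstable subbundle has dimension equal to the number of positive Lyapunov exponents counted with multiplicity, so $\dim E^u=n^+=n_1$ and hence $d\cdot\dim E^u=(d_1/n_1)\,n_1=d_1=d_\mu^u$ by Theorem \ref{thm:LY}(2). This yields $\dim_H(\mathcal E_f)\ge\dim(E^s)+d_\mu^u$, as claimed.

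I do not expect any genuine obstacle here: all the substantive work --- constructing a horseshoe that maximizes the unstable Hausdorff dimension (Theorem B) and transferring the full-dimension high-emergence statement of Theorem \ref{thm:mainNakZ} from subshifts of finite type to the manifold (Theorem A) --- is already in place, and the corollary is purely a matter of specializing the parameters in Theorem A.
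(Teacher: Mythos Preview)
Your proposal is correct and matches the paper's intended approach: the corollary is an immediate specialization of Theorem~A to the case $l=1$, $\chi_1=\chi_l=\chi^+$, and the paper offers no separate proof beyond noting (just before Theorem~A) that the u-conformal case satisfies the standing assumptions. Your explicit verification of \textbf{A1}--\textbf{A2} and the arithmetic $\bigl(1-(1-d)\tfrac{\chi_l}{\chi_1}\bigr)\dim E^u=d\,n^+=d_\mu^u$ is exactly what is needed, and your remark that $d_\mu^u>0$ is implicitly required (since \textbf{A1} demands $d>0$) is a fair observation about the paper's standing hypotheses.
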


We will prove Theorem A by constructing an appropriate horseshoe in a neighborhood of the support of $\mu$. 
We prove the following result in Section \ref{sec:Kat} by combining arguments from \cite{ACW} and \cite{Mendoza}.


\begin{thmB}\label{thm:Kat}
Consider a $C^{1+\alpha}$ diffeomorphism $f:M \to M$ preserving a hyperbolic probability measure $\mu$ satisfying Conditions \textbf{A1} and \textbf{A2}. For any $\delta>0$ and $0 <\delta_1  \ll \delta $ there exists a compact $f$-invariant set $\Lambda\subset M$ such that:
\begin{enumerate}
\item there exists a dominated splitting on $\Lambda$:
$$ T_{\Lambda}M = E^s \oplus E^1 \oplus E^2 \oplus \ldots \oplus E^l, \text{ with }\dim(E^i)=n_i \text{ for }i= 1,\ldots, l;   $$
\item there exists $n_0\geq 1$ such that for each $i= 1,\ldots, l$, each $x\in\Lambda$, each unit vector $v\in E^i(x)$ and all $n\geq n_0$,
$$   \exp((\chi_i -\delta_1)n)\leq ||   Df^n_x(v) ||  \leq   \exp((\chi_i+\delta_1)n) ;  $$

$~$
\newline denote  $E^u =  E^1 \oplus E^2 \oplus \ldots \oplus E^l$,

\item the function $   \log |  \det  Df_{| E^{u}}|  $ is H\"older continuous on $\Lambda$;

\item the topological pressure satisfies:  $P(f_{|\Lambda},   - d   \log |  \det  Df_{| E^u}| ) > -\delta $;

\item there exists an ergodic, $f$-invariant measure $\nu$ supported on $\Lambda$ and such that
$$ d^u_{\nu}  \geq  \dim E^u \left(       1 - \frac{\chi_l}{\chi_1}(1-d)  \right)   - \delta  >0 .    $$ 
\end{enumerate}
\end{thmB}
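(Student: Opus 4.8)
The plan is to build $\Lambda$ as a Katok horseshoe, following the Avila–Crovisier–Wilkinson construction in \cite{ACW} to obtain the dominated splitting and uniform Lyapunov estimates, and then refine the measure-selection step as in \cite{Mendoza, Sanchez} to control the unstable dimension. First I would fix a small $\delta>0$ and $0<\delta_1\ll\delta$, and invoke the Katok approximation: by Pesin theory there is a Pesin block of almost-full measure on which the Oseledets splitting varies continuously and the Lyapunov estimates hold up to $\delta_1$; applying the Anosov closing/shadowing lemma on this block one produces a compact $f$-invariant locally maximal hyperbolic set $\Lambda$ carrying an invariant measure $\nu_0$ with $h_{\nu_0}(f)$ close to $h_\mu(f)$ and with Lyapunov exponents close to those of $\mu$. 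The dominated splitting $T_\Lambda M = E^s\oplus E_1\oplus\cdots\oplus E_l$ with the stated dimensions comes from the ACW refinement (grouping the unstable Oseledets subspaces by distinct exponents and using that domination is an open and persistent condition), which gives item (1), and the uniform estimates in item (2) follow by choosing the Pesin block parameters and the shadowing scale small enough. Item (3), Hölder continuity of $\log|\det Df_{|E^u}|$, is then automatic: $E^u$ is a Hölder-continuous subbundle over a hyperbolic set for a $C^{1+\alpha}$ diffeomorphism, and the determinant of a $C^\alpha$ map restricted to a Hölder bundle is Hölder.

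For items (4) and (5) the key point is to exploit the Ledrappier–Young formula (Theorem \ref{thm:LY}) together with Condition \textbf{A1}. Under \textbf{A1} we have $d_i = d\, n_i$, so
\[
h_\mu(f) = \sum_{i=1}^l d_i\chi_i = d\sum_{i=1}^l n_i\chi_i = d\int \log|\det Df_{|E^u}|\, d\mu,
\]
i.e. $P_\mu := h_\mu(f) - d\int\log|\det Df_{|E^u}|\,d\mu = 0$, where the integral is taken against $\mu$ using the Oseledets unstable bundle. Since the Katok construction can be made to approximate both the entropy and the integrals of the (continuous, after extending $\log|\det Df_{|E^u}|$ continuously near the support) potential by $\nu_0$, we get $h_{\nu_0}(f) - d\int\log|\det Df_{|E^u}|\,d\nu_0 > -\delta$, and by the variational principle $P(f_{|\Lambda}, -d\log|\det Df_{|E^u}|) \geq h_{\nu_0}(f) - d\int\log|\det Df_{|E^u}|\,d\nu_0 > -\delta$, giving item (4). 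For item (5), one takes $\nu$ to be an ergodic component of $\nu_0$ (or, better, an equilibrium-type measure obtained in the construction) with the same entropy and unstable-exponent estimates; the unstable dimension of $\nu$ on the horseshoe can be estimated from below by a Ledrappier–Young / Frostman argument using that on a horseshoe with dominated unstable splitting the conditional measures on unstable manifolds have a local dimension controlled by $h_\nu(f)$ and the unstable exponents. Concretely, $d^u_\nu \geq h_\nu(f)/\chi_l$ in the worst case where all expansion is at rate $\chi_l$; combining $h_\nu(f) \geq h_\mu(f) - \delta' = d\sum n_i\chi_i - \delta' \geq d\,\chi_1\dim E^u - \delta'$ with the bound $d^u_\nu \geq h_\nu(f)/\chi_l$ yields
\[
d^u_\nu \;\geq\; \frac{d\,\chi_1\dim E^u}{\chi_l} - \delta'' \;=\; \dim E^u\cdot\frac{d\,\chi_1}{\chi_l} - \delta'',
\]
which is weaker than what is claimed; to reach $\dim E^u\bigl(1-\tfrac{\chi_l}{\chi_1}(1-d)\bigr)$ one must instead distribute the entropy across the $E_i$ optimally, putting dimension $n_i$ (full) in the directions with small exponent and only the deficit in the slowest direction — this is exactly the Mendoza–Sánchez optimization, and Condition \textbf{A2} guarantees $1 - \tfrac{\chi_l}{\chi_1}(1-d) > 0$ so the resulting bound is nontrivial.

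The main obstacle I expect is item (5): getting the sharp lower bound on $d^u_\nu$ rather than the crude $h_\nu/\chi_l$. This requires choosing $\nu$ carefully so that along each subbundle $E_i$ the conditional dimension is as large as possible — ideally $n_i$ for $i<l$ and the remaining deficit along $E_l$ — which means the horseshoe must be built not just to approximate $h_\mu$ and the exponents but to approximate the full \emph{collection} of Ledrappier–Young partial dimensions $d_i$. Here Condition \textbf{A1} is what makes this feasible, because it forces $d_i/n_i$ to be constant, so "spreading the dimension optimally" is compatible with a single scaling parameter $d$; the technical work is to run the Katok/ACW construction while tracking a measure whose projected conditionals along the flag $E^s \subset E^s\oplus E_1 \subset \cdots$ have the prescribed dimensions, and then to verify via a Frostman-type lower bound on the horseshoe (using the uniform estimates of item (2) and the dominated splitting of item (1)) that $d^u_\nu$ is bounded below as stated. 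The bookkeeping between the Pesin-block parameters, the shadowing scale, and the errors $\delta,\delta_1$ will be the delicate part, but it is routine in spirit once the optimization in item (5) is set up correctly.
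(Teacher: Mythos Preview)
Your outline for items (1)--(3) matches the paper. For item (4) your variational-principle route is viable, but be aware that $\log|\det Df_{|E^u}|$ is only \emph{measurable} on $\operatorname{supp}\mu$ (the Oseledets bundle is not continuous), so ``extend continuously near the support'' is not available. The paper handles this by working on a Pesin uniformity block where the bundle is continuous and proving a pressure analogue of the ACW separated-set lemma; once $\Lambda$ with (1)--(2) is built, however, your argument also works: any invariant measure on $\Lambda$ with entropy close to $h_\mu(f)$ automatically has $\int\phi\,d\nu_0\approx -d\sum_i n_i\chi_i=-h_\mu(f)$ by item (2), so the pressure is near zero.

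The genuine gap is in item (5). You propose to build the horseshoe so that some measure $\nu$ has \emph{prescribed partial} Ledrappier--Young dimensions $d_i^\nu\approx d\,n_i$ along each $E_i$, and you flag this as the main obstacle. This is both much harder than needed and not obviously feasible for $d<1$ (Mendoza and S\'anchez-Salas treat only the SRB case $d=1$, where unstable conditionals are absolutely continuous). The paper instead derives (5) directly \emph{from} (4) by a short a~posteriori argument, with no control of partial dimensions during the construction. Take $\nu$ to be the unique equilibrium state for $\phi=-d\log|\det Df_{|E^u}|$ on the hyperbolic set $\Lambda$. Item (4) together with item (2) gives
\[
h_\nu(f)\;>\;d\sum_{i=1}^l n_i\chi_i \;-\; s
\]
for a small $s$. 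Applying Ledrappier--Young to $\nu$ (whose exponents on $E_i$ are $\chi_i\pm\delta_1$ by (2)) yields $h_\nu(f)=\sum_i\gamma_i\chi_i$ up to a small error, where $\gamma_i\in[0,n_i]$ are the partial unstable dimensions of $\nu$ and $d^u_\nu=\sum_i\gamma_i$. Subtracting,
\[
\sum_{i=1}^l \chi_i\,(\gamma_i - d\,n_i)\;>\;-s.
\]
Now split the sum by the sign of $\gamma_i-dn_i$, replace $\chi_i$ by $\chi_l$ on the positive part and by $\chi_1$ on the nonpositive part, and use $\gamma_i\le n_i$ to bound the positive part by $(1-d)\dim E^u$. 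A two-line manipulation gives
\[
d^u_\nu - d\dim E^u \;=\;\sum_i(\gamma_i-dn_i)\;>\;-\frac{s}{\chi_1}-\Bigl(\frac{\chi_l}{\chi_1}-1\Bigr)(1-d)\dim E^u,
\]
which rearranges to the bound in (5). So the ``Mendoza--S\'anchez optimization'' you anticipate is just this elementary inequality applied to the equilibrium state; nothing about the horseshoe construction beyond (1), (2), (4) is required.
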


\begin{remark}\label{rmk:1}
A construction of a hyperbolic horseshoe satisfying (1), (2), and with topological entropy close to $h_{\mu}(f)$ was presented in \cite[Theorem 3.3]{ACW}.
Even though it was not stated there, Statement (3) is an immediate consequence of statements (1) and (2), see for example \cite[Theorem 5.3.2]{pesRed} or \cite[Theorem 19.1.6 and Corollary 19.1.13]{Kat}.
For our purposes we need a horseshoe with unstable Hausdorff dimension close to $d^u_{\mu}$. This can be obtained by replacing in \cite{ACW} quantities corresponding to entropy with analogous quantities corresponding to pressure of an appropriate potential.
We will then show that (4) implies (5).
\end{remark}


\section{Proof of Theorem B}\label{sec:Kat}

\subsection{Summary of the proof}
We start the proof by recalling some classical results from the theory of non-uniformly hyperbolic systems. As the first step in proving Theorem B we establish existence of a compact (but not $f$-invariant) set $X$ of nearly full measure and
on which the dynamics of $f$ is uniformly hyperbolic. That is, the set $X$ satisfies Statements $(1)$ and $(2)$ of Theorem B but it lacks invariance.

Next we define and study some thermodynamical quantities on $X$. The results obtained in this step are stated in Lemma 3.3 and will ultimately contribute to proving Statement (4) and (5) in Theorem B.

In Lemma 3.4 we use recurrence and Lemma 3.3 to construct a finite subset $\tilde{X}\subset{X}$ of points that start in the same, arbitrarily small neighborhood $B$ and return to $B$ after some large but fixed time $L\in\mathbb{N}$.
The set $\tilde{X}$ is constructed in such a way, that between times $0$ and $L$ the trajectories of points in $\tilde{X}$ "separate"\footnote{We give precise definition of a \textit{separated set} in Section \ref{sec:thermodynamic}.} 
by some small distance $\rho>0$ and in fact, the orbit segments of length $L$ of all the points in $\tilde{X}$ collectively approximate the topological pressure of
$\phi=-d \log | \det Df_{| E^u}|$. 

Finally we introduce $\Lambda$ - the set of all points that shadow infinite sequences of trajectory segments (of length $L$) of points in $\tilde{X}$.
We prove that this set satisfies the assertion of Theorem B. In particular the measure $\nu$ is obtained as the unique equilibrium measure for $\phi$ on $\Lambda$.

\subsection{Proof of Theorem B}\label{sec:Kat}
We need the following Oseledec-Pesin reduction theorem for the derivative cocycle  \cite[Theorem 5.6.1 and Theorem 3.5.5]{pesRed}. 

\begin{thm}\label{thm:Pes}
Let $f : M \to M$ be a $C^{1+\alpha}$ diffeomorphism preserving an ergodic probability measure $\mu$
with Lyapunov exponents $\lambda_1 > \ldots > \lambda_s$ of multiplicities $k_1, \ldots k_s$.
Then there exists an invariant set $Z\subset M$ with $\mu(Z)=1$ such that for any $\eta>0$ the following holds on $Z$:
\begin{enumerate}
\item There exists a measurable family of invertible linear maps $C(x): T_xM \to \mathbb{R}^{\dim M}$ and $A_i(x)\in GL(k_i, \mathbb{R})$, $i=1, \ldots ,s,$ such that

\begin{enumerate}
\item\label{eqn:Oseledecsplit}
$$Df(x)= C^{-1}(f(x)) diag[ A_1(x), \ldots, A_s(x)  ] C(x),$$

\item\label{eqn:norms}
$$  e^{\lambda_i-\eta} < ||A_i^{-1}||^{-1} \leq ||A_i|| < e^{\lambda_i+\eta} ,  $$ 

\item\label{eqn:temp} 
$C(x)$ is tempered, that is for all $x\in Z$ $$  \lim_{m\to \pm \infty  }  \frac{1}{m}   \log ||   C(f^m(x))  ||   =    \lim_{m\to \pm \infty  }  \frac{1}{m}   \log ||   C^{-1}(f^m(x))  ||  =0;      $$
\end{enumerate}

\item there exist measurable functions $r,K: Z \to (0,1],$ and a collection of embeddings $\psi_x:  B(0, r(x)) \to M$ such that:

\begin{enumerate}
\item   $\psi_x(0) = x $ and $\psi_x = \exp_x \circ C(x),$

\item the maps $f_x = \psi_{f(x)}    \circ f   \circ \psi^{-1}_x$ satisfy   $d_{C^1}(f_x, Df_x(0))<\eta,$

\item there exists a constant $Q>0$ such that  for any $y,y'\in   \psi^{-1}_x(B(0,r(x))),$
$$      \frac{1}{K(x)}d(y,y')   \leq    ||  \psi_x(y) -  \psi_x(y')  ||   \leq    \frac{1}{Q}d(y,y')  ,$$

\item  $$  e^{-\eta}  < \frac{r(f(x))}{r(x)} < e^{\eta}  \text{ and } e^{-\eta}  < \frac{K(f(x))}{K(x)} < e^{\eta}  .$$
\end{enumerate}
\end{enumerate}
\end{thm}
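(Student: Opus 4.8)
The plan is to reproduce the classical construction of Lyapunov (Pesin) charts for the derivative cocycle, see \cite{pesRed}, which splits into four stages: extract the Oseledets splitting; build the $\eta$-tempered Lyapunov inner product together with the reducing family $C(x)$, obtaining part (1); introduce the charts $\psi_x$ and the control functions $r(x),K(x)$, obtaining (2)(a) and (2)(c); and finally verify the $C^1$-closeness (2)(b) and the slow-variation estimates (2)(d). Throughout, compositions are read in the order making the domains match. For the first stage, apply the Oseledets multiplicative ergodic theorem to $Df$ over $(M,f,\mu)$: there is an $f$-invariant set $Z_0$ with $\mu(Z_0)=1$, a measurable $Df$-invariant splitting $T_xM=E_1(x)\oplus\cdots\oplus E_s(x)$ with $\dim E_i(x)=k_i$, such that $\lim_{m\to\pm\infty}\frac1m\log\|Df^m(x)v\|=\lambda_i$ for every nonzero $v\in E_i(x)$, and such that the norms of $Df^m|_{E_i}$ and the angles between the $E_i$'s vary subexponentially along orbits. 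Fix $\eta>0$ and a subexponential error $0<\varepsilon<\eta$ with which all these estimates hold on $Z_0$.

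For the Lyapunov metric, on each $E_i(x)$ set
$$\langle u,v\rangle'_x=\sum_{m\in\Z}\big\langle Df^m(x)u,\,Df^m(x)v\big\rangle_{f^m(x)}\,e^{-2\lambda_i m-2\eta|m|},$$
declaring distinct $E_i(x)$ to be $\langle\cdot,\cdot\rangle'_x$-orthogonal; the series converges on $Z_0$ since its $m$-th term is bounded by a constant (finite for a.e.\ $x$) times $e^{(2\varepsilon-2\eta)|m|}$. Let $C(x):T_xM\to\R^{\dim M}$ be the linear isomorphism carrying a $\langle\cdot,\cdot\rangle'_x$-orthonormal basis adapted to the splitting (first the $E_1$-block, then the $E_2$-block, and so on) onto the standard basis of $\R^{\dim M}$. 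Then $C(f(x))\,Df(x)\,C(x)^{-1}$ is block diagonal; calling its $i$-th block $A_i(x)\in GL(k_i,\R)$ gives (1)(a), and the index shift $m\mapsto m+1$ in the series defining $\|Df(x)u\|'_{f(x)}$ for $u\in E_i(x)$, together with $\big||m\pm1|-|m|\big|\le1$, yields $e^{\lambda_i-\eta}\le\|Df(x)u\|'_{f(x)}/\|u\|'_x\le e^{\lambda_i+\eta}$, which is (1)(b). Part (1)(c) follows since $\|C(x)^{\pm1}\|$ is dominated by a uniform constant times the subexponential angle and growth defects at $x$, whence $\frac1m\log\|C(f^mx)^{\pm1}\|\to0$; shrink $Z_0$ to a full-measure invariant set $Z$ on which everything above holds.

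For the charts, put $\psi_x=\exp_x\circ C(x)^{-1}$ on the ball $B(0,r(x))$, with $r(x)$ fixed in the last stage and small relative to the injectivity radius of $M$ and to $\|C(x)^{-1}\|$; then $\psi_x(0)=x$, $D\psi_x(0)=C(x)^{-1}$, so (2)(a) holds. Since $\exp$ is uniformly bi-Lipschitz near the zero section over the compact manifold $M$ and $C(x)$ is a linear isomorphism, choosing $K(x)$ to be a fixed multiple of $\max(1,\|C(x)\|,\|C(x)^{-1}\|)$ gives (2)(c) with a uniform constant $Q>0$. For $f_x=\psi_{f(x)}^{-1}\circ f\circ\psi_x$ one computes $Df_x(0)=C(f(x))\,Df(x)\,C(x)^{-1}=\mathrm{diag}[A_1(x),\dots,A_s(x)]$, so (2)(b) reduces, after imposing $r(x)\le1$, to making the oscillation of $Df_x$ on $B(0,r(x))$ smaller than $\eta$; since $f\in C^{1+\alpha}$ with global Hölder constant $H$ for $Df$, one has $\|Df_x(z)-Df_x(0)\|\le H\,\|C(f(x))\|\,\|C(x)^{-1}\|^{1+\alpha}\,\|z\|^{\alpha}$, so it suffices that $r(x)^{\alpha}$ be at most $\eta$ divided by that coefficient --- a positive measurable function of $x$. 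This is precisely where $C^{1+\alpha}$, rather than merely $C^1$, is used.

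The main obstacle is then purely a tempering matter: the ``admissible radius'' just produced, and the function $\max(1,\|C(x)\|,\|C(x)^{-1}\|)$ entering $K(x)$, need not vary at rate $e^{\pm\eta}$ along orbits as required by (2)(d). The standard remedy is regularization: replace the admissible radius by its slowly varying minorant $r(x):=\inf_{n\in\Z}\big(\text{admissible radius at }f^nx\big)\,e^{-\eta|n|}$, which is still positive, measurable and admissible and satisfies $e^{-\eta}<r(f(x))/r(x)<e^{\eta}$ by construction, and replace $K$ by its slowly varying majorant; one then checks that shrinking $r$ and enlarging $K$ does not disturb (2)(a)--(c). Collecting the four stages on the common full-measure invariant set $Z$ proves the theorem. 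Apart from this regularization bookkeeping --- checking measurability and admissibility of the regularized $r$ and $K$, and their compatibility with the bi-Lipschitz bounds --- the argument is a direct computation with the Lyapunov inner product and the Oseledets estimates, so there is no conceptual difficulty beyond faithfully executing the classical construction.
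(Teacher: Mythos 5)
The paper does not actually prove this statement: it is quoted, with attribution, from \cite[Theorems 5.6.1 and 3.5.5]{pesRed}, so there is no in-paper argument to compare against. Your outline reproduces the standard construction from that reference --- Oseledets splitting, two-sided Lyapunov inner product with weights $e^{-2\lambda_i m-2\eta|m|}$, the reducing maps $C(x)$, charts $\exp_x\circ C(x)^{-1}$, H\"older control of $Df$ to make $f_x$ $C^1$-close to its linear part on a measurable radius, and a final tempering/regularization step --- so in substance it is exactly the argument the paper implicitly relies on; you have also, correctly, read the compositions $\psi_x=\exp_x\circ C(x)^{-1}$ and $f_x=\psi_{f(x)}^{-1}\circ f\circ\psi_x$ in the only way that typechecks with $C(x):T_xM\to\R^{\dim M}$.

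One step fails as written, namely the regularization of the admissible radius. Since the admissible radius $r_0$ is bounded (you impose $r_0\le 1$), the function $\inf_{n\in\Z}r_0(f^nx)\,e^{-\eta|n|}$ is identically zero, not a positive slowly varying minorant. The tempering (kernel) lemma in \cite{pesRed} uses the opposite sign: take $r(x):=\inf_{n\in\Z}r_0(f^nx)\,e^{\eta|n|}$ for the minorant and $K(x):=\sup_{n\in\Z}K_0(f^nx)\,e^{-\eta|n|}$ for the majorant; positivity of $r$ and finiteness of $K$ then require $\tfrac1n\log r_0(f^nx)\to0$ and $\tfrac1n\log K_0(f^nx)\to0$ along a.e.\ orbit, which in your construction should be stated explicitly but does follow from the temperedness of $\|C^{\pm1}\|$ in (1)(c), since $r_0$ and $K_0$ are built from these norms and uniform constants. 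Two cosmetic points: with weight exponent exactly $2\eta|m|$ the index-shift argument only gives the non-strict bounds $e^{\lambda_i-\eta}\le\|A_i^{-1}\|^{-1}\le\|A_i\|\le e^{\lambda_i+\eta}$, so run the construction with a parameter strictly smaller than $\eta$ to get the strict inequalities of (1)(b); and the $C^1$-distance in (2)(b) includes the $C^0$ part, which on $B(0,r(x))$ with $r(x)\le1$ follows from your derivative bound together with $f_x(0)=0$. With these repairs the proposal is a faithful rendering of the classical proof.
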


Statement (\ref{eqn:temp}) guarantees that choosing $m_0\in\mathbb{N}$ large enough, we can find a set $X_0\subset Z$ of measure arbitrarily close to one on which $     ||   C\circ f^m||, ||C^{-1}\circ f^m||\in  ( e^{-\eta m}, e^{\eta m})$ for all $m\geq m_0$.
This combined with Lusin's theorem gives that for any $\eta>0$ there exists a compact set $X \subset X_0 \subset Z$ of measure arbitrarily close to one on which the functions $r,K,C, C^{-1}$ are continuous.
Following the terminology used in \cite{ACW}, we will refer to any such set as a \textit{uniformity block of tolerance $\eta$}. It is worth pointing out that those are closely related to Pesin's regular sets.

\subsubsection{Thermodynamic quantities}\label{sec:thermodynamic}

For $n\geq 1$ we define the \textit{dynamical metric} $d_n$ on $M$ as
$$  d_n(x,y):=  \max_{0\leq k\leq n} d(f^k(x),f^k(y)) . $$
For $x\in M$ and $\rho>0$ we denote by $B_n(x,\rho)$ the open ball with respect to the metric $d_n$ centered at $x$ of radius $\rho$.
We say that a set $E\subset M$ is
\begin{itemize} 
\item \textit{$\mu-(n,\rho,\beta)$- spanning} if $ \mu   ( \bigcup_{x\in E}  B_n(x,\rho) )   > 1-\beta.$
\item \textit{$(n,\rho)$- separated} if for any two elements $x,x'\in E$ one has $d_n(x,x')>\rho$.
\newline In addition, for any subset $A\subset M$ and an $(n,\rho)$- separated set $E\subset A$ we say that 
\item $E$ is a \textit{maximal $(n,\rho)$- separated set in  $A$}, if there is no other  $(n,\rho)$- separated set in  $A$ containing $E$.
\end{itemize}

We observe that for any $A\subset M$ one has that if $E$ is a maximal $(n,\rho)$- separated set in  $A$, then $E$ is $\mu-(n,2 \rho,1-\mu(A))$- spanning.
We denote by $C(n,\rho,\beta)$ the minimal cardinality of a $\mu-(n,\rho,\beta)$- spanning set. The following was shown in \cite[Theorem 1.1]{Katok1980}.

\begin{thm}\label{thm:entr}
For any $\beta \in (0,1) ,$
$$  h_{\mu}(f) =      \lim_{\rho\to 0}    \lim_{n\to \infty}  \frac{1}{n} \log   C(n,\rho,\beta).$$
\end{thm}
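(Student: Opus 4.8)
The plan is to derive this from the Brin--Katok local entropy formula, which for an ergodic measure (the setting of \cite[Theorem~1.1]{Katok1980}, which I assume; no hyperbolicity is used here) states that for $\mu$-a.e.\ $x$,
\begin{multline*}
h_\mu(f)=\lim_{\rho\to 0}\liminf_{n\to\infty}\Bigl(-\tfrac1n\log\mu\bigl(B_n(x,\rho)\bigr)\Bigr)\\
=\lim_{\rho\to 0}\limsup_{n\to\infty}\Bigl(-\tfrac1n\log\mu\bigl(B_n(x,\rho)\bigr)\Bigr).
\end{multline*}
This is itself a consequence of the Shannon--McMillan--Breiman theorem applied to finite partitions of small diameter (compared with $\rho$), so a self-contained argument would reprove it that way. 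Fix $\epsilon>0$ and the given $\beta\in(0,1)$. Combining the formula with Egorov's theorem, I would first record that there is $\rho_0=\rho_0(\epsilon)>0$ such that for every $\rho\in(0,\rho_0)$ one can find a set $G_\rho\subset M$ of measure as close to $1$ as desired and an integer $n_0$ with
\[
e^{-n(h_\mu(f)+\epsilon)}\le \mu\bigl(B_n(x,\rho/4)\bigr)\quad\text{and}\quad \mu\bigl(B_n(x,2\rho)\bigr)\le e^{-n(h_\mu(f)-\epsilon)}
\]
for all $x\in G_\rho$ and all $n\ge n_0$.

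For the \emph{upper bound} $\limsup_{n}\tfrac1n\log C(n,\rho,\beta)\le h_\mu(f)+\epsilon$ I would choose $G_\rho$ with $\mu(G_\rho)>1-\beta$ and take a maximal $(n,\rho/2)$-separated set $E\subset G_\rho$. The Bowen balls $B_n(x,\rho/4)$, $x\in E$, are pairwise disjoint and each has $\mu$-measure $\ge e^{-n(h_\mu(f)+\epsilon)}$, whence $|E|\le e^{n(h_\mu(f)+\epsilon)}$; on the other hand, maximality forces every point of $G_\rho$ into some $B_n(x,\rho)$ with $x\in E$, so $E$ is $\mu$-$(n,\rho,\beta)$-spanning and $C(n,\rho,\beta)\le |E|\le e^{n(h_\mu(f)+\epsilon)}$ for $n\ge n_0$.

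For the \emph{lower bound} $\liminf_{n}\tfrac1n\log C(n,\rho,\beta)\ge h_\mu(f)-\epsilon$ I would choose $G_\rho$ with $\mu(G_\rho)>(1+\beta)/2$ and let $E$ be an arbitrary $\mu$-$(n,\rho,\beta)$-spanning set. Since $\mu\bigl(\bigcup_{x\in E}B_n(x,\rho)\bigr)>1-\beta$, we have $\mu\bigl(G_\rho\cap\bigcup_{x\in E}B_n(x,\rho)\bigr)>(1-\beta)/2>0$; for each $x\in E$ whose Bowen ball meets $G_\rho$, picking $y_x\in G_\rho\cap B_n(x,\rho)$ gives $B_n(x,\rho)\subset B_n(y_x,2\rho)$ and hence $\mu(B_n(x,\rho))\le e^{-n(h_\mu(f)-\epsilon)}$. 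Summing these estimates over $x\in E$ yields $(1-\beta)/2<|E|\,e^{-n(h_\mu(f)-\epsilon)}$, so $C(n,\rho,\beta)\ge \tfrac{1-\beta}{2}\,e^{n(h_\mu(f)-\epsilon)}$ for $n\ge n_0$. (The case of $\beta$ close to $1$ causes no trouble because $\mu(G_\rho)$ may be taken arbitrarily close to $1$.)

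Putting the two halves together, for every $\rho\in(0,\rho_0(\epsilon))$,
\[
h_\mu(f)-\epsilon\le\liminf_{n\to\infty}\tfrac1n\log C(n,\rho,\beta)\le\limsup_{n\to\infty}\tfrac1n\log C(n,\rho,\beta)\le h_\mu(f)+\epsilon,
\]
and letting $\rho\to0$ and then $\epsilon\to0$ gives $\lim_{\rho\to0}\lim_{n\to\infty}\tfrac1n\log C(n,\rho,\beta)=h_\mu(f)$. The real content is the Brin--Katok formula (equivalently, Shannon--McMillan--Breiman together with the comparison of Bowen balls and refinement-partition atoms); granting it, the only mildly delicate points are the bookkeeping with the radii $\rho/4,\rho,2\rho$ --- harmless under $\lim_{\rho\to0}$ --- and the choice of how large to take $\mu(G_\rho)$. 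I expect supplying (or reproving) Brin--Katok to be the main obstacle.
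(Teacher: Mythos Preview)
Your argument is correct, but there is nothing in the paper to compare it with: the paper does not prove Theorem~\ref{thm:entr} at all, it simply quotes the statement from \cite[Theorem~1.1]{Katok1980} and uses it as a black box. So your proposal is strictly more than what the paper supplies.

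As a side remark on the approach: what you sketch is the now-standard route via the Brin--Katok local entropy formula combined with Egorov, and the bookkeeping with radii $\rho/4,\rho,2\rho$ and with $\mu(G_\rho)$ is handled correctly. Katok's original 1980 argument predates Brin--Katok and works instead directly from the Shannon--McMillan--Breiman theorem applied to a fine generating partition, comparing partition atoms with Bowen balls; the two arguments are essentially equivalent, and you already note this equivalence. One small cosmetic point: the parenthetical about $\beta$ close to $1$ is unnecessary---the constant $(1-\beta)/2$ in your lower bound is positive for every $\beta\in(0,1)$ and disappears after taking $\tfrac1n\log$, so no special care is needed there.
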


We now consider the function $$  \phi(x): =    -d \log |  \det Df_{| E^u(x)} |  \text{ and denote } S_n\phi(x) = \sum_{k=1}^{n-1}\phi (f^k(x)).  $$ Observe that $\phi$ is only measurable, however it is continuous on every uniformity block.
If $X\subset M$ is a uniformity block, we denote by

$$     Q(n,\rho,\beta, X) = \inf \left\{    \sum_{x\in E}  \exp(S_n\phi(x))   |         E\subset X \text{ is }  \mu-(n,\rho,\beta)  \text{-spanning}          \right\}  .   $$ 
Restricting to a uniformity block is an essential modification of the quantity studied in \cite{Mendoza}. Because $\phi$ is not continuous, Theorem 1.1 in  \cite{Mendoza} does not apply.
Instead, we can prove the following.

\begin{lem}\label{lem:partsum}
If $\eta >0$ and $X\subset Z$ is a uniformity block of tolerance $\eta$, then there exists $m_0\in\mathbb{N}$ such that:
\begin{enumerate}
\item for all $m\geq m_0$ and any $x\in X$ one has\footnote{ We use the notation $A = \pm B$ to mean $  -B \leq  A  \leq B  $.} 
$$    \exp( S_m\phi (x)) =   | \det Df^m(x)_{| E^u(x)}  | ^{-d} =     e^{  \pm 3 \eta m d   \dim E^u}     \prod_{i=1}^{l}   e^{-\chi_i    d n_i   m };$$
\item for all $m\geq m_0$ and any finite set $A\subset X$,
$$  \frac{1}{m} \log  \sum_{x\in A}   \exp(S_m \phi (x)) > -3\eta d \dim(E^u) + \left[  \frac{1}{m} \log   \# A   - h_{\mu}(f)  \right] ; $$
\item for all $m\geq m_0$ and for any $\beta\in (0,1)$ one has
$$  \frac{1}{m} \log   Q(m,\rho,\beta, X) > -3\eta d \dim(E^u) + \left[  \frac{1}{m} \log   C(m,\rho,\beta)   - h_{\mu}(f)  \right] , $$
\item and in particular
$$  \lim_{\rho\to 0}   \liminf_{m\to \infty} \frac{1}{m} \log   Q(m,\rho,\beta, X) > -3\eta d \dim(E^u).$$
\end{enumerate}
\end{lem}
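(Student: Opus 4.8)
The plan is to establish part (1) first --- a distortion estimate for the unstable Jacobian read off from the Oseledec--Pesin reduction of Theorem~\ref{thm:Pes} --- and then to derive (2), (3) and (4) as essentially formal consequences, using the Ledrappier--Young formula (Theorem~\ref{thm:LY}) together with Condition \textbf{A1} to rewrite $\prod_{i=1}^{l}e^{-\chi_i d n_i m}$ as $e^{-h_\mu(f)m}$, and Katok's entropy formula (Theorem~\ref{thm:entr}) for the passage to the limit in (4).

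For (1): let $X$ be a uniformity block of tolerance $\eta$, let $u=\dim E^u$, and let $\mathbb{R}^u\subset\mathbb{R}^{\dim M}$ be the span of the coordinate blocks of Theorem~\ref{thm:Pes} corresponding to the positive exponents. Since $C$ and $C^{-1}$ are continuous on the compact set $X$, the bundle $E^u(x)=C(x)^{-1}(\mathbb{R}^u)$ varies continuously on $X$, so $|\det C(x)_{|E^u(x)}|$ is bounded between two positive constants on $X$. From Theorem~\ref{thm:Pes}(1a), $Df^m(x)=C^{-1}(f^m x)\,\mathrm{diag}[A_1^{(m)}(x),\dots,A_s^{(m)}(x)]\,C(x)$, where $A_i^{(m)}(x)=A_i(f^{m-1}x)\cdots A_i(x)$. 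Restricting to the $Df$-invariant subspace $E^u(x)$ and computing determinants with respect to orthonormal bases gives
$$|\det Df^m(x)_{|E^u(x)}|=\frac{|\det C(x)_{|E^u(x)}|}{|\det C(f^m x)_{|E^u(f^m x)}|}\prod_{i:\,\chi_i>0}|\det A_i^{(m)}(x)|.$$
By Theorem~\ref{thm:Pes}(1b), $|\det A_i^{(m)}(x)|=\prod_{k=0}^{m-1}|\det A_i(f^k x)|\in(e^{(\chi_i-\eta)n_i m},e^{(\chi_i+\eta)n_i m})$, so the product over $i$ equals $e^{\pm\eta m u}\prod_i e^{\chi_i n_i m}$. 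The numerator of the ratio is bounded by constants depending only on $X$; crucially, the denominator cannot be controlled this way, since $f^m x$ leaves $X$, but the tempered estimate $\|C^{\pm 1}(f^m x)\|<e^{\eta m}$ (valid for all $x\in X\subset X_0$ and all $m\ge m_0$) gives $|\det C(f^m x)_{|E^u(f^m x)}|\in(e^{-\eta m u},e^{\eta m u})$. Enlarging $m_0$ so that the constants coming from $X$ are themselves within $e^{\pm\eta m u}$, the ratio is $e^{\pm 2\eta m u}$, hence $|\det Df^m(x)_{|E^u(x)}|=e^{\pm 3\eta m u}\prod_i e^{\chi_i n_i m}$. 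Raising to the power $-d$ and comparing with $S_m\phi$ (the few extra bounded factors arising from the index range of the Birkhoff sum are harmless for $m\ge m_0$) yields the stated equality; finally $\prod_i e^{-\chi_i d n_i m}=e^{-d m\sum_i\chi_i n_i}=e^{-m\sum_i d_i\chi_i}=e^{-h_\mu(f)m}$ by Condition \textbf{A1} and Theorem~\ref{thm:LY}(3).

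Given (1), parts (2)--(4) are bookkeeping. For (2), $\sum_{x\in A}\exp(S_m\phi(x))\ge\#A\cdot\min_{x\in A}\exp(S_m\phi(x))\ge\#A\cdot e^{-3\eta m d u}e^{-h_\mu(f)m}$; taking $\tfrac1m\log$ and rearranging gives the bound. For (3), any $\mu$-$(m,\rho,\beta)$-spanning set $E\subset X$ is in particular $\mu$-$(m,\rho,\beta)$-spanning, so $\#E\ge C(m,\rho,\beta)$; substituting into (2) and taking the infimum over all such $E$ gives the claim (if no spanning set lies in $X$, the statement is vacuous with the convention $\inf\emptyset=+\infty$). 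For (4), take $\liminf_{m\to\infty}$ and then $\lim_{\rho\to 0}$ in (3): by Theorem~\ref{thm:entr}, $\lim_{\rho\to 0}\lim_{n\to\infty}\tfrac1n\log C(n,\rho,\beta)=h_\mu(f)$, so the two entropy terms cancel and only $-3\eta d\dim E^u$ survives (with the strict inequality retained by the slack left in the constant).

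The main obstacle is part (1), and within it the single genuinely delicate point: the forward orbit $f^m x$ exits the uniformity block, so the distortion of the reduction cocycle $C$ at $f^m x$ must be kept under control through the temperedness estimate (the Lusin-regularized form of Theorem~\ref{thm:Pes}(2c)) rather than by compactness of $X$. Everything else --- the elementary determinant identity, the per-block exponent bounds, and the identification with entropy via \textbf{A1} and Ledrappier--Young --- is routine.
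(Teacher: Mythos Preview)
Your proposal is correct and follows essentially the same line as the paper's proof: the determinant is factored via the Oseledec--Pesin reduction, the $A_i$ blocks give $e^{(\chi_i\pm\eta)n_i m}$, the initial $C(x)$ term is controlled by continuity on the compact block $X$, and the forward term $C(f^m x)$ is handled by the tempered bound (the key point you correctly flag), with $m_0$ enlarged to absorb the constant; parts (2)--(4) then follow from the identity $\sum_i \chi_i d n_i = h_\mu(f)$ (via \textbf{A1} and Ledrappier--Young) together with the cardinality bound $\#E\ge C(m,\rho,\beta)$ and Katok's entropy formula, exactly as in the paper.
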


\begin{proof}
By Theorem \ref{thm:Pes}, for any $x\in Z$ we have that
$$  Df(x)_{| E^u(x)} = C^{-1}(f(x))  diag[   A_1(x), \ldots  ,  A_l(x)] C(x),  $$
consequently, since $Z$ is $f$-invariant,

\begin{align*}
  Df^m(x)_{| E^u(x)} & =    Df(f^{m-1}(x))   _{| E^u(f^{m-1}(x))}  \circ   \ldots   \circ    Df(f(x))   _{| E^u(f(x))}   \circ    Df(x) _{| E^u(x)}   \\
&= C^{-1}(f^m(x))  diag[  \prod_{k=1}^{m-1}  A_1(f^k(x)), \ldots  , \prod_{k=1}^{m-1}  A_l(f^k(x))] C(x).
\end{align*}

We then have that

$$   | \det Df^m(x)_{| E^u(x)}  | =  |\det C^{-1}(f^m(x)) | \cdot   \prod_{k=1}^{m-1} |\det  A_1(f^k(x))|\cdot \ldots  \cdot \prod_{k=1}^{m-1}  | \det A_l(f^k(x)) | \cdot |\det C(x)|. $$

We estimate all the terms in the product above.
By Statement (\ref{eqn:temp}) in Theorem \ref{thm:Pes} and because $X$ is a uniformity block, there exists $\tilde{m}_0\in\mathbb{N}$ such that
 $||   C( f^m(x))||, ||C^{-1}( f^m(x))||=  e^{ \pm \eta m}$ for all $m\geq \tilde{m}_0$. Consequently, by Hadamard's determinant inequality,
$$    |\det C^{-1}(f^m(x)) | =  e^{ \pm \eta m   \dim E^u}  \text{ for all $m\geq \tilde{m}_0$}. $$
By continuity of $C$ and $C^{-1}$ on $X$, there exists a constant $Q>0$ such that $  |\det C(x)|   = Q^{\pm 1}. $
In addition, for $i=1,\ldots, l$ and any $k\in\mathbb{N},$ Statement (\ref{eqn:norms}) of Theorem \ref{thm:Pes} implies that 
$$  |\det A_i(f^k(x))|   = e^{(\chi_i \pm \eta) n_i}. $$
Together we obtain that

$$     | \det Df^m(x)_{| E^u(x)}  | =  e^{ \pm \eta m   \dim E^u}   \prod_{i=1}^{l}   e^{(\chi_i \pm \eta) n_i   m } Q^{\pm 1}   
 =    e^{ \pm 2 \eta m   \dim E^u} Q^{\pm 1}    \prod_{i=1}^{l}   e^{\chi_i  n_i   m }  $$
for every $x\in X$ and $m\geq \tilde{m}_0$. We set $m_0\geq \tilde{m}_0$ so that $Q< e^{\eta \dim E^u m_0}.$
Then for any $m\geq m_0$ one has
$$     \exp (S_m\phi (x)) =   | \det Df^m(x)_{| E^u(x)}  | ^{-d} =     e^{  \pm 3 \eta m d   \dim E^u}     \prod_{i=1}^{l}   e^{-\chi_i    d n_i   m } $$
so we proved the first statement of the lemma.
If now $    A \subset X$ is any finite set, then for all $m\geq m_0$ we can estimate

$$  \sum_{x\in A} \exp (S_m\phi (x)) = \sum_{x\in A }    | \det Df^m(x)_{| E^u(x)}  | ^{-d}   \geq     e^{ -  3 \eta m d   \dim E^u}    \prod_{i=1}^{l}   e^{-\chi_i    d n_i   m }  \# A.   $$

From this we obtain

$$   \frac{1}{m} \log   \sum_{x\in A} \exp( S_m\phi (x))    \geq    - 3 \eta  d   \dim E^u      - \sum_{i=1}^l  \chi_i    d n_i    +   \frac{1}{m} \log \# A$$
$$ =    - 3 \eta  d   \dim E^u    + \left[  - \sum_{i=1}^l  \chi_i    d n_i  + h_{\mu}(f)\right]   +  \left[ \frac{1}{m} \log  \# A  - h_{\mu}(f)\right]. $$

We complete the proof of the second statement of the lemma by observing that $   - \sum_{i=1}^l  \chi_i    d n_i  + h_{\mu}(f) = 0$ by Theorem \ref{thm:LY}.

If now $    E\subset X \text{ is }  \mu-(m,\rho,\beta)  \text{-spanning},$ then $\# E\geq   C(m,\rho,\beta) $ so Statement (3) is a consequence of Statement (2).
In addition, by Theorem \ref{thm:entr}, we conclude

\begin{align*}
 \lim_{\rho \to 0} \liminf_{m\to \infty}   \frac{1}{m} \log   Q(m,\rho,\beta, X)    &   \geq   \lim_{\rho\to 0}  \liminf_{m\to\infty}  \left[   - 3 \eta  d   \dim E^u- \sum_{i=1}^l  \chi_i    d n_i    +   \frac{1}{m} \log  C(m,\rho,\beta) \right] \\
 & = - 3 \eta  d   \dim E^u     - \sum_{i=1}^l  \chi_i    d n_i  + h_{\mu}(f) =  - 3 \eta  d   \dim E^u.
\end{align*}

\end{proof}






The following is the analogue of Lemma 8.6 in \cite{ACW}.

\begin{lem}\label{lem:Y}
Let $X\subset Z$ be a uniformity block of tolerance $\eta>0$.
For any $\delta_0>0$ there exists $\bar{\rho}>0$ such that the following holds.
For any $\epsilon>0$ and $0< \rho< \bar{\rho}$ there exists $L\in\mathbb{N}$  (arbitrarily large), an open set $B\subset M$ of diameter less than $\epsilon$, and a set $\tilde{X}\subset B  \cap X$ such that:
\begin{enumerate}
\item $\tilde{X}$ is $(L,\rho)$-separated,

\item $f^L(\tilde{X})\subset B\cap X$,

\item $$\sum_{x\in \tilde{X}} \exp(S_L\phi(x)) \geq \exp[(-3\eta d \dim(E^u) - \delta_0)L]. $$

\end{enumerate}
\end{lem}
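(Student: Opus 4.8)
The plan is to follow the Katok-type argument from \cite{ACW,Mendoza}: start from a large $\mu-(n,\rho,\beta)$-spanning set of near-minimal $\phi$-weight, pass to a positive-measure subset where orbits return close to themselves, and extract from it an $(L,\rho)$-separated subset of a small ball that maps back into the same ball while keeping most of the weight. First I would fix $\delta_0>0$ and, using Lemma \ref{lem:partsum}(4), choose $\bar\rho>0$ and $N_0\in\mathbb N$ so that for all $0<\rho<\bar\rho$ and all $n\geq N_0$ one has $Q(n,\rho,\beta,X)\geq \exp[(-3\eta d\dim(E^u)-\delta_0/3)n]$. Next, cover $X$ by finitely many open sets $B_1,\dots,B_p$ of diameter less than $\epsilon$; it suffices to produce the conclusion with $B$ one of these $B_i$.

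The core step is a pigeonhole/counting argument. Take $n$ large and let $E$ be a maximal $(n,\rho)$-separated subset of $X$; then $E$ is $\mu-(n,2\rho,\mu(X))$-spanning, so (after adjusting $\beta$ and $\rho$) $\sum_{x\in E}\exp(S_n\phi(x))\geq Q(n,2\rho,\beta,X)$ is at least $\exp[(-3\eta d\dim(E^u)-\delta_0/2)n]$. Now I would use a Rokhlin-tower / recurrence argument: for a suitable $L\leq n$, the points $x\in E$ whose orbit $x,f^L(x)$ both lie in the same piece $B_i\cap X$ can be organized so that their total $\phi$-weight is a definite fraction of the whole sum. Concretely, one distributes $E$ over the "columns" indexed by the pair (initial location, location after time $L$) among the $B_i$; since there are only $p^2$ such pairs and the weights multiply nicely under concatenation (by the cocycle estimate in Lemma \ref{lem:partsum}(1), $S_{a+b}\phi(x)=S_a\phi(x)+S_b\phi(f^a x)\pm O(\eta)$), one of the columns carries weight at least $p^{-2}$ times the total. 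Taking $\tilde X$ to be the $E$-points in that column, and $B=B_i$, gives an $(L,\rho)$-separated set inside $B\cap X$ with $f^L(\tilde X)\subset B\cap X$ and $\sum_{x\in\tilde X}\exp(S_L\phi(x))\geq p^{-2}\exp[(-3\eta d\dim(E^u)-\delta_0/2)L]$, which for $L$ large enough absorbs the $p^{-2}$ factor into $\exp(-\delta_0 L/2)$ and yields (3). Here "arbitrarily large $L$" follows because $n$ (hence the available $L$) can be taken as large as we wish.

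The main obstacle is making the recurrence/pigeonhole step respect the $\phi$-weights rather than just cardinalities: the functions $r,K,C$ vary along the orbit, so I must control how $S_L\phi$ on an orbit segment starting and ending in $B\cap X$ compares to $S_L\phi$ computed with the "reduced" dynamics, and ensure that restricting to $\tilde X$ does not lose more than a subexponential factor. This is exactly where restricting to the uniformity block $X$ (so that $C,C^{-1}$ are continuous and the estimates of Lemma \ref{lem:partsum} hold) is essential, and where the proof departs from \cite{Mendoza}. A secondary technical point is that the separation constant must be preserved when passing from time $n$ to time $L\leq n$, which is automatic since $d_L\leq d_n$, so an $(n,\rho)$-separated set is $(L,\rho)$-separated. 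Once these bookkeeping issues are handled, the statement follows by choosing the parameters in the order $\delta_0\to\bar\rho\to(\rho,\epsilon)\to L$.
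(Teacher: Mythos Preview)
Your overall strategy (maximal separated set, pigeonhole over a finite cover, recurrence) is the same as the paper's, but two steps in your implementation do not work as written.

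First, the separation claim is backwards. You say that passing from time $n$ to time $L\le n$ is ``automatic since $d_L\le d_n$, so an $(n,\rho)$-separated set is $(L,\rho)$-separated.'' The inequality $d_L\le d_n$ goes the wrong way for this: $d_n(x,x')>\rho$ means the orbits separate \emph{somewhere} in $[0,n]$, possibly only after time $L$, so it does \emph{not} imply $d_L(x,x')>\rho$. The paper avoids this by choosing the separated set at a time $m$ \emph{smaller} than $L$: it fixes a small $\xi>0$, takes a maximal $(m,\rho)$-separated set, and then picks $L\in[m,(1+\xi)m]$, so that $L\ge m$ and $(m,\rho)$-separation upgrades to $(L,\rho)$-separation.

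Second, your pigeonhole over the $p^2$ ``columns'' (initial cell, cell at time $L$) tacitly assumes every $x\in E$ lands in some column, i.e.\ that $f^L(x)\in X$ (indeed in some $B_i$). Nothing in ``$E$ is a maximal separated set in $X$'' forces this; a priori a large weighted fraction of $E$ could have $f^L(x)\notin X$ and belong to no column. The paper handles recurrence \emph{before} taking the separated set: it first defines $X_m=\{x\in X: f^n(x)\ \text{is in the same element of the cover as}\ x\ \text{for some}\ n\in[m,(1+\xi)m]\}$, uses Birkhoff to get $\mu(X_m)\to\mu(X)$, and only then takes the maximal $(m,\rho)$-separated set inside $X_m$. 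Pigeonholing over the $O(\xi m)$ possible return times and then over the $t$ cover elements costs only a subexponential factor.

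One more simplification you are missing: on the uniformity block, Lemma~\ref{lem:partsum}(1) says $\exp S_m\phi(x)$ is \emph{constant} up to the factor $e^{\pm 3\eta m d\dim E^u}$, so there is no need to track $\phi$-weights through the argument or to invoke $Q$ and part~(4). The paper simply counts cardinalities (via Katok's entropy formula, Theorem~\ref{thm:entr}) and converts to the weighted inequality~(3) only at the very end using Lemma~\ref{lem:partsum}(2). Your worry about ``how $S_L\phi$ on an orbit segment starting and ending in $B\cap X$ compares'' is therefore a non-issue once both endpoints are in $X$.
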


\begin{remark}
The relation between the parameters $\delta, \delta_1$ in Theorem B, $\delta_0$ in Lemma \ref{lem:Y}, and $\eta$ in Theorem \ref{thm:Pes} is stated at the end of the proof of Theorem B.
\end{remark}

\begin{proof}
Let $\epsilon, \delta_0>0 $ be fixed. We then consider $\xi :=  \frac{\delta_0}{  h_{\mu}(f)   + 4 }$.
Let $\alpha = \alpha_1, \ldots, \alpha_t$ be a finite cover (of cardinality $t$) of $X$ by open sets of diameter less than $\epsilon$. 
For $m\in \mathbb{N}$ define
$$  X_m := \{  x\in X | x \text{ and } f^n(x) \text{ are in the same element of }\alpha \text{ for some }  n\in [m, (1+\xi) m]    \}.   $$
The Birkhoff Ergodic Theorem implies that $\mu(X_m) \to \mu(X)$ as $m\to \infty$. Let $M_0$ be such that $ \mu(X_m)> \mu(X)/2  $ for all $m\geq M_0$.
Let $\bar{\rho}>0$ be small enough so that $$  \liminf_{m\to \infty}   \frac{1}{m} \log   C(m,\rho,1-\mu(X)/2)   \geq    h_{\mu}(f)    - \xi/2 \text{ for every } 0< \rho\leq 2 \bar{\rho}.$$
Let $M_1 > m_0$ (where $m_0$ is the constant in Lemma \ref{lem:partsum}) be big enough so that $$\frac{1}{m} \log   C(m,2\bar{\rho},1-\mu(X)/2)   \geq  h_{\mu}(f) - \xi \text{ for all } m\geq M_1.$$

We now fix $m> \max(  M_0, M_1, \xi^{-1}\log t    ).$ Chose any $\rho \in (0 ,\bar{\rho})$.
Let $E_m$ be a maximal $(m,\rho)$-separated set in $X_m$.
Then $E_m$  is $\mu-(m,2 \rho,1-\mu(X)/2)$-spanning, consequently
$$\# E_m \geq  \exp ((h_{\mu}(f) - \xi)m ).   $$



For $n\in [m, (1+\xi)m]$ consider the set
$$   V_n :=  \{  x\in E_m | x \text{ and } f^n(x) \text{ are in the same element of }\alpha   \}.      $$
Let $L$ be a value of $n$ maximizing $\# V_n$. 
From the definition of $X_m$ it follows that $E_m=\bigcup_{n=m}^{\lfloor  (1+\xi)m  \rfloor} V_n,$ where $\lfloor \cdot \rfloor$ denotes the floor function.
Consequently, $$  \xi m \cdot \# V_L  \geq \sum_{n=m}^{\lfloor  (1+\xi)m  \rfloor}  \#V_n \geq \# E_m.$$
Then

$$\# V_L\geq    \frac{ \# E_m }{  \xi m    }     \geq     \frac{    \exp ((h_{\mu}(f) - \xi)m )   }{\xi m}   \geq    \exp ((h_{\mu}(f) - 2\xi)m ).    $$ 
Consequently,

\begin{align*}
\frac{1}{L} \log \#V_L   - h_{\mu}(f) &=  (h_{\mu}(f) - 2\xi)\frac{m}{L}   - h_{\mu}(f) \geq   (h_{\mu}(f) - 2\xi)\frac{1}{1+\xi}   - h_{\mu}(f) \\
& =   \frac{    h_{\mu}(f) - 2\xi -   h_{\mu}(f) - \xi  h_{\mu}(f)    }{1+\xi}  =\frac{    - 2\xi - \xi  h_{\mu}(f)    }{1+\xi} \geq    - 2\xi - \xi  h_{\mu}(f) . 
\end{align*}

By Statement (2) in Lemma \ref{lem:partsum}, we obtain that

$$\sum_{x\in V_L} \exp(S_L\phi(x))   \geq     \exp   [  (-3\eta d \dim(E^u) -  2 \xi  - \xi h_{\mu}(f)   )L ] . $$ 

We now choose an element $\bar{\alpha}$ of $\alpha$ which maximizes the sum $\sum_{x\in V_L \cap \bar{\alpha}} \exp(S_L\phi(x)) .$
We set $\tilde{X}:= V_L \cap \bar{\alpha}$.
Observe that $\tilde{X} \subset V_L\subset E_m$ is $(m,\rho)$-separated, then it is also $(L,\rho)$-separated since $L\geq m$.
In addition, we have that
\begin{align*}
 \sum_{x\in    \tilde{X}} \exp(S_L\phi(x)) & \geq   \frac{  \exp [  (-3\eta d \dim(E^u) -  2 \xi  - \xi h_{\mu}(f)   )L ] }{t}\\
&  \geq  \exp [  (-3\eta d \dim(E^u) -  3 \xi  - \xi h_{\mu}(f)   )L ],  
\end{align*}
where we used that fact that $L\geq m >   \xi^{-1}\log t  .$
Finally, since $\xi = \frac{\delta_0}{h_{\mu}(f) +4},$ we conclude that

 $$\sum_{x\in    \tilde{X}} \exp(S_L\phi(x)) \geq     \exp [  (-3\eta d \dim(E^u)    -   \delta_0   )L ].$$
\end{proof}

If now $\tilde{X}$ is a set constructed in Lemma \ref{lem:Y}, one
can define the shift space $\mathcal{X}=\tilde{X}^{\mathbb{Z}}$ over the alphabet $\tilde{X}$. 
To each sequence  $\textbf{x}=(x_n)_{n\in\mathbb{Z}}\in \mathcal{X}$ one can associate a pseudo-orbit

$$   o(\textbf{x})  := \ldots  ,   x_0, \ldots, f^{L-1}(x_0), x_1 , \ldots , f^{L-1}(x_1), \ldots.   $$

Observe that for each $n\in\mathbb{Z}$ we have that $x_n\in \tilde{X}$ and $d(f^L(x_n), x_{n+1})<\epsilon$, where $\epsilon>0$ was chosen in Lemma \ref{lem:Y} as an upper bound for the diameter of the set $B$ and can be made arbitrarily small.  
The following was proved in \cite[Theorem 8.3]{ACW}.

\begin{thm}\label{thm:shad}
There exists $\kappa>0$ such that if $\eta>0$ is sufficiently small, then there are constants  $C_0,\epsilon_0>0$ such that if $\epsilon\in (0,\epsilon_0)$, then:
\begin{enumerate}
\item for each sequence   $\textbf{x}\in \mathcal{X}$ there is a unique point $y\in M$ whose orbit $C_0\epsilon$-shadows $o(\textbf{x})$,
\item $y$ belongs to the regular neighborhood $\psi^{-1}_{x_0}(B(0, r(x_0)/2)),$
\item If $y,y'$ shadow two pseudo-orbits $o(\textbf{x}), o(\textbf{x}')$ such that $x_n=x'_n$ for $|n|\leq N,$ then $d(y,y')\leq C_0 e ^{-\kappa N}$.
\end{enumerate}
\end{thm}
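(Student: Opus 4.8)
The plan is to build the shadowing point $y$ as the intersection of a nested sequence of sets obtained by pulling back regular neighborhoods along the pseudo-orbit $o(x_n)$, exactly as in the classical Anosov closing/shadowing argument but carried out inside the Pesin charts $\psi_x$ provided by Theorem \ref{thm:Pes}. First I would fix the tolerance $\eta>0$ small enough (to be quantified below) and pass to the chart maps $f_x = \psi_{f(x)}\circ f\circ \psi_x^{-1}$, which by Theorem \ref{thm:Pes}(2b) are $C^1$-close to the linear maps $Df_x(0) = \operatorname{diag}[A_1(x),\dots,A_l(x)]$ on $B(0,r(x))$. Since the points along the pseudo-orbit all lie in the uniformity block $X$ (for the base points $x_i$) and in a fixed neighborhood of it, the functions $r,K,C,C^{-1}$ are uniformly bounded below/above there, so there is a uniform inner radius $r_0>0$ for the charts and a uniform hyperbolicity estimate: in the chart coordinates the dynamics expands the $E^u$-directions by at least $e^{\chi_1-2\eta}$ and contracts the $E^s$-directions by at most $e^{-\chi^s+2\eta}<1$ per iterate along a single $f_{x_i}$, and contracts/expands by controlled factors over the length-$L$ blocks. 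The $\epsilon$-gaps $d(f^L(x_i),x_{i+1})<\epsilon$ in the pseudo-orbit become, after applying $\psi_{x_{i+1}}$ (which is bi-Lipschitz with the constant $K$ on $X$), errors of size $O(\epsilon)$ in the charts.

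Next I would run the standard graph-transform / cone argument. For each finite window $[-N,N]$ one solves for the genuine orbit segment that exactly follows the chart dynamics $f_{x_i}$ and lands, after each block of length $L$, within the $O(\epsilon)$-error of the next base point; hyperbolicity guarantees this segment exists, is unique inside the balls $B(0,r_0/2)$, and stays $C_0\epsilon$-close to the pseudo-orbit provided $C_0$ is chosen so that $C_0 > \frac{K_{\max}}{1-\theta}$ where $\theta<1$ is the contraction rate per block in the appropriate metric, and provided $\epsilon_0$ is small enough that $C_0\epsilon_0 < r_0/2$. Letting $N\to\infty$, the uniqueness for each window plus the exponential contraction on stable/unstable sides forces the finite-window solutions to converge; the limit point $y$ then $C_0\epsilon$-shadows the full bi-infinite pseudo-orbit, and by construction $y\in\psi_{x_0}^{-1}(B(0,r(x_0)/2))$, giving (1) and (2). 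Statement (3) is then the quantitative stability of this construction: if $(x_n)$ and $(x'_n)$ agree for $|n|\le N$, the two solutions satisfy the same chart equations on the window $[-NL, NL]$ up to the boundary, so the difference is controlled by the hyperbolic contraction coming from both ends, yielding $d(y,y') \le C_0 e^{-\kappa N}$ with $\kappa = L\min(\chi_1,\chi^s) - O(\eta L)$, where $\chi^s>0$ denotes the spectral gap on $E^s$.

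I expect the main obstacle to be purely bookkeeping rather than conceptual: one must verify that the charts $\psi_{x_i}$ do glue along the pseudo-orbit even though consecutive base points $x_i, x_{i+1}$ are only $\epsilon$-close (not equal), so that $\psi_{x_{i+1}}^{-1}\circ\psi_{f^L(x_i)}$ is well-defined and close to the identity on the relevant ball — this requires $\epsilon$ small relative to $r_0$ and a uniform modulus of continuity for $x\mapsto \psi_x$ on $X$, which is where continuity of $C, C^{-1}$ on the uniformity block is used. The other delicate point is tracking how the tempered-but-not-uniform constants $r(x), K(x)$ behave: they vary subexponentially along the true orbit of $x_0$, but since $x_0\in X$ and the orbit re-enters $X$ at the times $iL$, one gets genuinely uniform bounds at those return times, which is exactly what the block structure of $o(x_n)$ buys us. Since this is precisely the content of \cite[Theorem 8.3]{ACW} in our notation, I would state the adaptation and refer there for the detailed estimates rather than reproduce them.
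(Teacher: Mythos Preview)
Your proposal is correct and aligns with the paper's treatment: the paper does not give its own proof of this theorem but simply records it as \cite[Theorem~8.3]{ACW}, and you arrive at the same endpoint, quoting that reference after a sketch of the Pesin-chart shadowing argument. The sketch you give is the right one and matches what ACW actually do, so there is nothing to correct.
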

We consider $\Lambda_0$ - the set of all the orbits that shadow the elements of $\mathcal{X}$.
We claim that the set  $$\Lambda= \Lambda_0 \cup f(\Lambda_0)\cup \ldots \cup f^{L-1}(\Lambda_0)$$ satisfies the assertion of Theorem B.



Let $y\in \Lambda_0 $ be the unique point that shadows $o(\textbf{x})$.  For  $i=1,\ldots, l$, where $l$ is the number of distinct Lyapunov exponents for $f$,
we define cones  $\mathcal{C}^i(f^m(y))$ along the trajectory of $y$ in the following way.
 For $k\in [0,L-1]$, we define a cone $\mathcal{C}^i(  f^{nL + k  }   (y))$ in $ T_{f^{nL + k  }   (y)} M$
as the parallel transport of the cone $\mathcal{C}^i   ( E^i (f^k x_n), \pi/4   )$ - the cone at $ T_{f^k (x_n)}M$ of angle $\pi/4$ around the subspace  $E^i (f^k x_n)$.

Let $\delta_1$ be a small positive number. It was shown in  \cite{ACW} that by choosing $\eta>0$ sufficiently small, one can guarantee that such defined cones are preserved under $Df$ along the orbit of $y$.
In addition, there exist  subspaces $E^i     (f^ m( y))   \subset \mathcal{C}^i(  f^{m }   (y))  $ such that:
\begin{itemize}
\item $ T_{\Lambda}M = E^s \oplus E^1 \oplus E^2 \oplus \ldots \oplus E^l$ and for $i=1,\ldots, l,~ y\in\Lambda:$
\item $\dim (E^i) = n_i,$
\item $Df_{f^m(y)} E^i     (f^ m( y))   = E^i     (f^{ m+1}( y)),$
\item there exists $m_1$ such that for all $m\geq m_1$ and for all $n\in\mathbb{Z}$, and a vector $v\in E^i( f^{nL}(y))$ one has that
$$  \exp ((\chi_i - \delta_1/2)m)  \leq || Df^{m}_{f^{nL}y}(v)  || \leq  \exp ((\chi_i + \delta_1/2)m)  $$
and then choosing large enough $\tilde{m}_1> m_1$, for all $m\geq \tilde{m}_1, $  all $n\in\mathbb{Z}$, $k\in[0,L-1],$ and a vector $v\in E^i( f^{nL+k}(y))$ one has that
$$  \exp ((\chi_i - \delta_1)m)  \leq || Df^{ m}_{f^{nL+k}y}(v)  || \leq  \exp ((\chi_i + \delta_1)m).  $$
\end{itemize}
Then the function $\phi = -d \log |\det Df_{| E^u}| $ is defined along the trajectory of $y$. We conclude that $\phi$ is defined on the compact invariant set
 $\Lambda= \Lambda_0 \cup f(\Lambda_0)\cup \ldots \cup f^{L-1}(\Lambda_0)$.
 Theorem 5.3.2 in \cite{pesRed} gives that the subspaces $E^u(y)$ vary H\"older continuously with the point $y\in\Lambda$ and so $\phi$ is H\"older continuous on $\Lambda$.

In addition, for $y\in\Lambda$ and $m\geq \tilde{m}_1$  we conclude that $Df^m(y)_{| E^u(y)}$ has the block diagonal form:

$$Df^m(y)_{| E^u(y)}=  diag[ B^{(m)}_1(y), \ldots, B^{(m)}_l(y)  ] ,$$

with $B^{(m)}_i(y)\in GL(n_i, \mathbb{R})$, $i=1, \ldots ,l,$ such that

$$  e^{(\chi_i-\delta_1)m} < ||(B^{(m)}_i)^{-1}||^{-1} \leq ||B^{(m)}_i|| < e^{(\chi_i+\delta_1)m} $$

Using  Hadamard's determinant inequality we can estimate for all $y\in \Lambda$ and $m\geq \tilde{m}_1$:

\begin{equation}\label{eqn:had}
   \exp( S_{m} \phi (y)) =     |\det Df^m(y)_{| E^u(y)}|^{-d}   =e^{\pm \delta_1 d \dim E^u m } ~ \prod_{i=1}^l e^{-\chi_i d n_i m } .  
\end{equation}

If $y\in \Lambda_0$ then by Statement (1) in Lemma \ref{lem:partsum}, we can write 

$$  \exp (S_{mL} \phi (y)) = e^{\pm (\delta_1 + 3 \eta) d \dim E^u Lm}  \prod_{k=0}^{m-1}  \exp( S_L \phi (x_k) ) .    $$

Now consider the collection $\mathcal{I}_m$ of all distinct $m$-tuples $(x_0, \ldots, x_{m-1})\in \tilde{X}^{m}.$
To each element in $\mathcal{I}_m$ we can assign a point $y\in \Lambda_0$ that shadows it. Denote the corresponding set of $y$'s by $Y_m$.
 Since the set $\tilde{X}$ is $(L, \rho)$-separated, choosing $\epsilon>0$ small enough so that $C_0\epsilon< \rho/3$ we can guarantee that if
$y,y'$ are two points shadowing two distinct  $m$-tuples: $(x_0, \ldots, x_{m-1})$ and $(x'_0, \ldots, x'_{m-1})$, then $y$ and $y'$ are $(mL, \rho/3)$-separated. That means that the set $Y_m$ is $(mL, \rho/3)$-separated.
In addition, we have that
\begin{align*}
\sum_{y\in Y_m}   \exp( S_{mL} \phi (y)) &=   e^{\pm (\delta_1 + 3 \eta) d \dim E^u Lm}  \sum_{ (x_0, \ldots, x_{m-1}) \in \mathcal{I}_m   }  \prod_{k=0}^{m-1}  \exp( S_L \phi (x_k)) \\
& =       e^{\pm (\delta_1 + 3 \eta) d \dim E^u Lm}      \left(  \sum_{x_k \in \tilde{X}}    \exp( S_L \phi (x_k)) \right)^m    .  
\end{align*}

By Statement (3) in Lemma \ref{lem:Y}, we conclude

$$     \sum_{y\in Y_m}   \exp (S_{mL} \phi (y) )   \geq   \exp \left[        ( (-\delta_1 - 6\eta) d \dim E^u - \delta_0) Lm      \right]$$
 
and
\begin{align*}
 P(f_{|\Lambda},   - d   \log |  \det  Df_{| E^u}| )  &=  P(f_{|\Lambda_0},   - d   \log |  \det  Df_{| E^u}| ) \\
&>    (-\delta_1 - 6\eta) d \dim E^u - \delta_0.  
\end{align*}


Observe that $\Lambda$ is a compact, invariant hyperbolic set for $f$ and therefore there exists a unique equilibrium measure $\nu$ for $\phi$ with respect to $f_{| \Lambda}$.
This measure is supported on $\Lambda$ and in particular it is hyperbolic. Because $\nu$ is an equilibrium measure, choosing sufficiently large $m> 0$, we can write that that

\begin{equation}\label{eqn:rough}
    (-\delta_1 - 6\eta) d^u_{\mu} - \delta_0 < P_{\nu}(f,\phi) =  h_{\nu}(f)  -   \int d  \log |  \det  Df_{| E^u}| d\nu 
\end{equation}  
$$= h_{\nu}(f)  -  \frac{1}{m}\int d  \log |    Df^m_{| E^u}| d\nu  =  h_{\nu}(f) -  \left( \sum_{i=1}^l \chi_i d n_i  \right) \pm \delta_1 d^u_{\mu}.$$

On the other hand, since $\nu$ is hyperbolic, by Theorem \ref{thm:LY}, for each $i=1,\ldots, l$ there are:

\begin{itemize}
\item numbers $ 1 \leq k_i\leq n_i$ such that for $  j=1, \ldots, k_i  $ there are
\item $ 1\leq n_i^j\leq n_i$ with $\sum_{j=1}^{k_i}n_i^j = n_i$,
\item $\chi_i^j = \chi_i \pm \delta_1$,
\item $0 < d_i^{j} \leq n^j_i $ with $\sum_{i=1}^{l}\sum_{j=1}^{k_i} d_i^j = d^u_{\nu},$
 and such that
\end{itemize}

\begin{equation}\label{eqn:precise}
h_{\nu}(f) - \sum_{i=1}^l \sum_{j=1}^{k_i}   d_i^j \chi^j_i = 0.
\end{equation}

Comparing the equations (\ref{eqn:rough}) and (\ref{eqn:precise}) we obtain that

$$        \sum_{i=1}^l \sum_{j=1}^{k_i}   d_i^j \chi^j_i  =      h_{\nu}(f)   =   \left( \sum_{i=1}^l \chi_i d n_i  \right) \pm (2 \delta_1 + 6 \eta)d^u_{\mu} \pm \delta_0.      $$

Substituting  $\chi_i^j = \chi_i \pm \delta_1$, we can write

$$  \left(     \sum_{i=1}^l      \sum_{j=1}^{k_i} d_i^j        \chi_i  \right)  \pm d^u_{\nu} \delta_1    =   \left( \sum_{i=1}^l \chi_i d n_i  \right)  \pm (2 \delta_1 + 6 \eta)d^u_{\mu} \pm \delta_0.     $$

Combining like terms we obtain that

$$      \sum_{i=1}^l   \chi_i \left(    \left( \sum_{j=1}^{k_i} d_i^j     \right) -   d n_i \right)   =   \pm (3 \delta_1 + 6 \eta)d^u_{\mu} \pm \delta_0.$$

Denote $s:= (3 \delta_1 + 6 \eta)d^u_{\mu} + \delta_0$ and $\gamma_i := \sum_{j=1}^{k_i} d_i^j  . $ We estimate

\begin{align*}
 -s \leq \sum_{i=1}^l \chi_i (\gamma_i - d n_i) &= \sum_{\{   i | \gamma_i > d n_i \}}    \chi_i (\gamma_i - d n_i)     +       \sum_{\{   i | \gamma_i \leq d n_i \}}     \chi_i (\gamma_i - d n_i)    \\
 &\leq   \sum_{\{   i | \gamma_i > d n_i \}}    \chi_l (\gamma_i - d n_i)     +       \sum_{\{   i | \gamma_i \leq d n_i \}}     \chi_1 (\gamma_i - d n_i)   .
\end{align*}

It follows that 

$$      \sum_{\{   i | \gamma_i \leq d n_i \}}     (\gamma_i - d n_i)  \geq -\frac{s}{\chi_1} - \frac{\chi_l}{\chi_1}   \sum_{\{   i | \gamma_i > d n_i \}}    (\gamma_i - d n_i) . $$

Consequently,
\begin{align*}
 d^u_{\nu} - d^u_{\mu} &=    \sum_{i=1}^l  (\gamma_i - d n_i) =  \sum_{\{   i | \gamma_i > d n_i \}}     (\gamma_i - d n_i)     +       \sum_{\{   i | \gamma_i \leq d n_i \}}     (\gamma_i - d n_i)  \\
&   \geq           \sum_{\{   i | \gamma_i > d n_i \}}     (\gamma_i - d n_i)       -\frac{s}{\chi_1} - \frac{\chi_l}{\chi_1}   \sum_{\{   i | \gamma_i > d n_i \}}    (\gamma_i - d n_i) .          \\
&  \geq        -\frac{s}{\chi_1} - \left(  \frac{\chi_l}{\chi_1}  - 1  \right)(1-d) \dim E^u.   
\end{align*}

Where we estimated 

$$    \sum_{\{   i | \gamma_i > d n_i \}}    (\gamma_i - d n_i)  \leq   \dim E^u - d \dim E^u.   $$

Finally, this gives that

$$   d^u_{\nu} \geq    -\frac{s}{\chi_1} + \dim E^u \left(   1+  (d-1) \frac{\chi_l}{\chi_1}   \right).$$

To finish the proof of Theorem B it is enough to choose $\delta_0, \delta_1$ and $\eta$ small enough so that

$$   \frac{  (3 \delta_1 + 6\eta)d \dim E^u + \delta_0     }{\chi_1}   < \delta.  $$


\section{Emergence}\label{sec:emer}

In this section we exploit the symbolic representation of the set $\Lambda_0$ constructed in the previous section. 
Recall that $\tilde{X}$ is the set constructed in Lemma \ref{lem:Y} and we consider the shift space $\mathcal{X}=\tilde{X}^{\mathbb{Z}}$ over the alphabet $\tilde{X}$. 
For $x,x'\in \tilde{X}$ set $\delta(x,x')=1$ if $x\neq x'$ and $\delta(x,x)=0$. We can then define the metric $d_{\mathcal{X}}$ on $\mathcal{X}$ as follows,
$$d_{\mathcal{X}}(\textbf{x},\textbf{x}'):= \sum_{n\in \mathbb{Z}}  \frac{\delta(x_n,x'_n)}{2^n}.$$
As before, to each  $\textbf{x}\in \mathcal{X}$ we associate a pseudo-orbit
$$   o(\textbf{x})  := \ldots  ,   x_0, \ldots, f^{L-1}(x_0), x_1 , \ldots , f^{L-1}(x_1), \ldots.   $$
Finally, we consider $\Lambda_0$ - the set of all the orbits that shadow the elements of $\mathcal{X}$.
We have the following.
\begin{lem}\label{lem:hor}
If $\eta, \epsilon>0$ are sufficiently small, then:
\begin{enumerate}
\item$f^L_{|\Lambda_0}$ is topologically conjugate to the full shift in $\# \tilde{X}$ symbols,
\item the conjugacy $\pi : \mathcal{X} \to \Lambda_0$ is H\"older continuous with respect to the metric $d_{\mathcal{X}}$ on $\mathcal{X}$.
\end{enumerate}
\end{lem}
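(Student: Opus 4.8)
The plan is to take for $\pi$ the natural coding map: to a sequence $(x_n)\in\mathcal{X}$ assign the unique point $y\in M$ whose orbit $C_0\epsilon$-shadows the pseudo-orbit $o(x_n)$, whose existence and uniqueness are furnished by Theorem \ref{thm:shad}(1). By construction $\Lambda_0=\pi(\mathcal{X})$, so $\pi$ is onto. The first step is to record the intertwining relation $\pi\circ\sigma=f^L\circ\pi$, where $\sigma$ is the left shift on $\mathcal{X}$: the pseudo-orbit $o(\sigma(x_n))$ is obtained from $o(x_n)$ by discarding its first $L$ entries, and likewise the forward orbit of $f^L(\pi(x_n))$ is obtained from that of $\pi(x_n)$, so $f^L(\pi(x_n))$ also $C_0\epsilon$-shadows $o(\sigma(x_n))$, and the uniqueness clause of Theorem \ref{thm:shad}(1) forces $f^L(\pi(x_n))=\pi(\sigma(x_n))$. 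Since $\sigma$ is a bijection of $\mathcal{X}$, this already yields $f^L(\Lambda_0)=\Lambda_0$.

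Next I would establish that $\pi$ is injective. If $\pi((x_n))=\pi((x'_n))=y$, then for each $n$ and each $k\in[0,L-1]$ the point $f^{nL+k}(y)$ lies within $C_0\epsilon$ of both $f^k(x_n)$ and $f^k(x'_n)$, so $d_L(x_n,x'_n)\le 2C_0\epsilon<\rho$ once $\epsilon$ is taken below $\rho/(2C_0)$, a condition compatible with the constraint $C_0\epsilon<\rho/3$ imposed earlier. By the $(L,\rho)$-separatedness of $\tilde X$ from Lemma \ref{lem:Y}(1) this forces $x_n=x'_n$ for every $n$, i.e. $(x_n)=(x'_n)$. Granting continuity of $\pi$, which I treat next, the map $\pi$ is then a continuous bijection of the compact space $\mathcal{X}$ onto the subset $\Lambda_0$ of the Hausdorff space $M$; hence $\Lambda_0$ is compact and $\pi$ is a homeomorphism. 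Together with the intertwining relation, this is exactly statement (1).

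Statement (2), along with the remaining continuity of $\pi$, I would read off from Theorem \ref{thm:shad}(3): if $(x_n)$ and $(x'_n)$ agree on $\{\,|n|\le N\,\}$, then $d(\pi(x_n),\pi(x'_n))\le C_0e^{-\kappa N}$. On the other hand, by the definition of $d_{\mathcal{X}}$, two sequences at $d_{\mathcal{X}}$-distance less than $2^{-N}$ necessarily coincide on $\{\,|n|\le N\,\}$. Combining these, if $d_{\mathcal{X}}((x_n),(x'_n))=t$ and $N$ is chosen with $2^{-N-1}\le t<2^{-N}$, then $d(\pi(x_n),\pi(x'_n))\le C_0e^{-\kappa N}\le C_0e^{\kappa}\,t^{\kappa/\log 2}$, so $\pi$ is H\"older continuous of exponent $\kappa/\log 2$; in particular it is continuous, which closes the gap left in the previous paragraph.

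The two ingredients of the argument, the shadowing lemma with its uniqueness and exponential-closeness clauses (Theorem \ref{thm:shad}) and the separatedness of $\tilde X$ (Lemma \ref{lem:Y}), are already in place, so no genuinely new estimate is needed; the only point requiring care is the order of the smallness choices. One must first fix $\eta>0$ small enough for Theorem \ref{thm:shad} to apply with its constants $\kappa,C_0,\epsilon_0$, and only then fix $\epsilon\in(0,\epsilon_0)$ small relative to $\rho$ (so that $2C_0\epsilon<\rho$, hence distinct symbols yield orbits that $d_L$ tells apart). Provided these are taken in this order nothing conflicts, and I expect the verification to be routine; this mild bookkeeping of which parameter depends on which is the only real obstacle.
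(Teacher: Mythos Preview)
Your proposal is correct and follows essentially the same approach as the paper: define $\pi$ via the shadowing map of Theorem~\ref{thm:shad}(1), use the $(L,\rho)$-separatedness of $\tilde X$ together with the smallness of $C_0\epsilon$ relative to $\rho$ to get injectivity, and read H\"older continuity directly off Theorem~\ref{thm:shad}(3). If anything, your write-up is more complete than the paper's, which leaves the intertwining relation $\pi\circ\sigma=f^L\circ\pi$ and the compactness argument for $\pi^{-1}$ implicit.
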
 

\begin{proof}
Let $\epsilon>0$ be small enough so that $C_0 \epsilon < \rho/3$.  Since the set $\tilde{X}$ is $(L, \rho)$-separated,
if $y,y'$ are two points shadowing two $\epsilon$-pseudo orbits $o(\textbf{x})$, $o(\textbf{x}')$ with $x_m\neq x'_m$, then $y$ and $y'$ are $(mL, \rho/3)$-separated and in particular different.
That means that the map $\pi : \mathcal{X} \to \Lambda_0$ defined by assigning to each sequence $\textbf{x}$ a unique point $y\in \Lambda_0$ that shadows $o(\textbf{x})$ is one-to-one.
The fact that $\pi$ is H\"older continuous follows from Statement (3) in Theorem \ref{thm:shad}.
\end{proof}

Choose $\bar{y}\in\Lambda_0$ and consider the set $ Y:=  V^u(\bar{y})\cap \Lambda_0$, where $V^u(\bar{y})$ denotes the local unstable manifold at $\bar{y}$.
Then given  $z\in \Lambda_0$ we denote by $V^s(z)$ the local stable manifold at $z$ and define $\theta(z)= V^s(z)\cap V^u(\bar{y})  \in Y$. 
We consider the map $F: Y \to Y$ defined by $F(y)= \theta( f^L(y))$.
Observe that: 

\begin{itemize}
\item  if a point $y\in Y$ has high pointwise emergence with respect to $F$, then it also has high pointwise emergence with respect to $f$,

\item if  $y\in Y$ has high pointwise emergence with respect to $f$, then so does every point $z$ on its stable manifold, 

\item by Lemma \ref{lem:hor}, $F$ is topologically conjugate to the left shift $(\sigma^+,\mathcal{X}^+)$, where we denoted  $\mathcal{X^+} = \tilde{X}^{\mathbb{N}}$,

\item the conjugacy $\pi^+ : \mathcal{X^+}  \to Y$  is H\"older continuous.
\end{itemize}

In addition to the above observations, we prove the following.

\begin{lem}\label{lem:1}
If $\omega^+\in \mathcal{X}^+$ has high pointwise emergence with respect to $\sigma^+$, then $\pi^+(\omega^+)$ has high pointwise emergence with respect to $F$.
\end{lem}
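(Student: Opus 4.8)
The plan is to transfer the high emergence of $\omega^+$ to $y_0:=\pi^+(\omega^+)$ by pulling the witnessing probability measures for $F$ back through $\pi^+$. This forces one to use a Hölder estimate for the \emph{inverse} coding map, and proving that estimate is the only substantive point: the bullet list above records only that $\pi^+$ is Hölder, but here it is the backward direction that is needed. (Using the forward Hölder continuity of $\pi^+$ instead would only produce an \emph{upper} bound on $\mathscr E_{F,y_0}$ in terms of $\mathscr E_{\sigma^+,\omega^+}$, which is the wrong inequality for concluding that $\mathscr E_{F,y_0}$ is large.)

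First I would show that $(\pi^+)^{-1}$ is Hölder continuous. Fix $y,y'\in Y$ with codes $(x_n)=(\pi^+)^{-1}(y)$ and $(x'_n)=(\pi^+)^{-1}(y')$, and suppose these codes differ at some index $\le m$. As in the proof of Lemma \ref{lem:hor} — using the $(L,\rho)$-separation of $\tilde X$ together with Theorem \ref{thm:shad}, with $\epsilon$ chosen so that $C_0\epsilon<\rho/3$ — the points $y,y'$ are then $(mL,\rho/3)$-separated, so $d(f^k y,f^k y')>\rho/3$ for some $0\le k\le mL$. Since $f$ is Lipschitz on $M$ with a constant $\Lambda_f=e^{c}>1$, and $y,y'$ lie in a (small) local unstable manifold, one has $d(f^k y,f^k y')\le\Lambda_f^{mL}\,d(y,y')$ for all such $k$, which forces $d(y,y')>\tfrac{\rho}{3}\Lambda_f^{-mL}$. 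Contrapositively, $d(y,y')\le\tfrac{\rho}{3}\Lambda_f^{-mL}$ implies $x_j=x'_j$ for all $0\le j\le m$, hence $d_{\mathcal X^+}\big((\pi^+)^{-1}y,(\pi^+)^{-1}y'\big)\le 2^{-m}$. Converting scales yields a constant $C_1>0$ and an exponent $\theta'\in(0,1]$ (comparable to $1/L$) with
\[
d_{\mathcal X^+}\big((\pi^+)^{-1}y,(\pi^+)^{-1}y'\big)\le C_1\, d(y,y')^{\theta'}\qquad\text{for all }y,y'\in Y.
\]

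With this in hand the transfer is routine. Since $t\mapsto t^{\theta'}$ is concave on $[0,\infty)$, pushing couplings of $\mu,\nu\in\mathcal P(Y)$ through $(\pi^+)^{-1}\times(\pi^+)^{-1}$ and applying Jensen's inequality gives $d_W\big((\pi^+)^{-1}_{*}\mu,(\pi^+)^{-1}_{*}\nu\big)\le C_1\big(d_W(\mu,\nu)\big)^{\theta'}$. As $\pi^+$ conjugates $\sigma^+$ to $F$, one also has $(\pi^+)^{-1}_{*}\big(\tfrac1n\sum_{k=0}^{n-1}\delta_{F^k(y_0)}\big)=\tfrac1n\sum_{k=0}^{n-1}\delta_{(\sigma^+)^k(\omega^+)}$ for every $n$. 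Hence, if $\{\nu_j\}_{j=1}^{N}\subset\mathcal P(Y)$ realizes $N=\mathscr E_{F,y_0}(\epsilon')$, then $\{(\pi^+)^{-1}_{*}\nu_j\}_{j=1}^{N}$ realizes $\mathscr E_{\sigma^+,\omega^+}\big(C_1(\epsilon')^{\theta'}\big)\le N$, so that $\mathscr E_{\sigma^+,\omega^+}\big(C_1(\epsilon')^{\theta'}\big)\le \mathscr E_{F,y_0}(\epsilon')$ for all small $\epsilon'>0$.

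Finally I would conclude by a change of variables: setting $\epsilon=C_1(\epsilon')^{\theta'}$, so that $-\log\epsilon'=\tfrac{1}{\theta'}(-\log\epsilon+\log C_1)$, the last inequality becomes
\[
\frac{\log\mathscr E_{F,y_0}(\epsilon')}{-\log\epsilon'}\ \ge\ \theta'\,\frac{\log\mathscr E_{\sigma^+,\omega^+}(\epsilon)}{-\log\epsilon+\log C_1}.
\]
Because $\omega^+$ has high pointwise emergence, $\limsup_{\epsilon\to0}\frac{\log\mathscr E_{\sigma^+,\omega^+}(\epsilon)}{-\log\epsilon}=\infty$, and since $-\log\epsilon+\log C_1\sim-\log\epsilon$ as $\epsilon\to0$, the right-hand side has $\limsup$ equal to $\infty$ as $\epsilon'\to0$; hence so does the left-hand side, which is exactly the assertion that $y_0=\pi^+(\omega^+)$ has high pointwise emergence with respect to $F$. (The same computation with $\liminf$ in place of $\limsup$ shows, in addition, that if the defining limit for $\omega^+$ equals $+\infty$ then so does the corresponding limit for $\pi^+(\omega^+)$.) The main obstacle is the backward Hölder estimate above — converting the combinatorial $(mL,\rho/3)$-separation of shadowing points into a genuine power-law lower bound for $d(y,y')$, for which the crude Lipschitz constant of $f$ is enough; the remaining steps are soft.
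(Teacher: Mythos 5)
Your proposal is correct and follows essentially the same route as the paper: a reverse Hölder bound for the coding map (lower bound on $d(y,y')$ via the $(L,\rho)$-separation of $\tilde X$ and a uniform expansion/Lipschitz constant), transferred to Wasserstein distances by pushing couplings through the conjugacy and applying Jensen, and then a change of scale in the $\limsup$. The only differences are cosmetic — you phrase the key estimate as Hölder continuity of $(\pi^+)^{-1}$ and use the global Lipschitz constant of $f$ where the paper uses $\max_{x\in\Lambda}\|Df_{|E^u(x)}\|$, which affects only the constants.
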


We conclude that 

\begin{equation}\label{eqn:low}
\dim_H(\mathcal{E}_f) \geq  \dim E^s  +    \dim_H  (\pi^+(E^+)),
\end{equation}

where $E^+$ denotes the set of points with high emergence for $\sigma^+$. We estimate $\dim_H  (\pi^+(E^+))$ and conclude Theorem A in the next section.

\begin{proof}[Proof of Lemma \ref{lem:1}]
We consider the metric $d^+(\textbf{x},\textbf{x}'):=\sum_{j=0}^{\infty} \frac{\delta(x_n,x'_n)}{2^n}$ on $\mathcal{X}^+$.
If $m$ is the smallest natural number for which $x_m\neq x'_m,$ then $d^+(\textbf{x},\textbf{x}')\leq \frac{C}{2^m}$ for some uniform constant $C>0$.
At the same time, for $y= \pi^+(\textbf{x}), y'=\pi^+(\textbf{x}')$ we must have that $d(f^k(y),f^k(y'))>\rho/3$ for some $(m-1)L \leq k\leq mL$.
Consequently, 
$$   d(y,y' )  >  \frac{\rho}{3} ~ \max_{x\in\Lambda}||  Df_{| E^u(x)}||^{-mL}  \geq \frac{\rho}{3C}~   d^+(\textbf{x},\textbf{x}')^K, $$
 where  $K= L \log_2 \max_{x\in\Lambda}||Df_{| E^u(x)}||.$    

Let now $\nu$ and $\mu$ be two Borel probability measures on $Y$ and let $\mathbb{P}$ be a Borel probability measure on $Y\times Y$ with the property that
$\mathbb{P}\circ \pi_1^{-1} = \nu$ and $\mathbb{P}\circ \pi_2^{-1} = \mu$. Here $\pi_i$ is the canonical projection on the $i$'th coordinate.
We consider the map $\pi^+_*: \mathcal{X}^+\times \mathcal{X}^+ \to Y \times Y$ given by $\pi^+_*(\textbf{x},\textbf{x}')= (\pi^+(\textbf{x}),\pi^+(\textbf{x}')).$
We denote $\mathbb{P}_*= \mathbb{P}\circ \pi^+_*$.
Observe that $\mathbb{P}_*\circ \pi_1^{-1} = \mu\circ \pi^+$ and  $\mathbb{P}_*\circ \pi_2^{-1} = \nu\circ \pi^+$.
Consequently,
\begin{align*}
\int_{Y\times Y} d(y,y') d\mathbb{P}& = \int_{\mathcal{X}^+\times \mathcal{X}^+}     d(\pi^+(\textbf{x}),\pi^+(\textbf{x}'))   d\mathbb{P}_*  \geq     \int_{\mathcal{X}^+\times \mathcal{X}^+}      \frac{\rho}{3C}~   d^+(\textbf{x},\textbf{x}')^K   d\mathbb{P}_*   \\
&\geq     \inf_{\mathbb{P}_* }   \int_{\mathcal{X}^+\times \mathcal{X}^+}     \frac{\rho}{3C}~   d^+(\textbf{x},\textbf{x}')^K   d\mathbb{P}_*   ,   
\end{align*}
where the infimum is taken over all Borel probability measures with the property that $\mathbb{P}_*\circ \pi_1^{-1} = \mu\circ \pi^+$ and  $\mathbb{P}_*\circ \pi_2^{-1} = \nu\circ \pi^+$.
Using Jensen's inequality, we obtain that
$$   \int_{Y\times Y} d(y,y') d\mathbb{P}    \geq     \frac{\rho}{3C}~ \left( \inf_{\mathbb{P}_* }   \int_{\mathcal{X}^+\times \mathcal{X}^+}      d^+(\textbf{x},\textbf{x}')   d\mathbb{P}_*   \right)^K  $$
and finally,
$$    d_W(\nu,\mu) \geq  \frac{\rho}{3C} d_W( \nu\circ \pi^+, \mu\circ \pi^+)^K.$$

Then for any $\epsilon>0$ and $y\in Y$ we have that

$$   \mathcal{E}_{F,y}\left(   \frac{\rho}{3C} \epsilon^K   \right)\geq  \mathcal{E}_{\sigma^+,(\pi^+)^{-1}(y)}(\epsilon).  $$

Consequently, if $\omega^+\in \mathcal{X}^+$ has high pointwise emergence with respect to $\sigma^+$, then for $y=\pi^+(\omega^+)$ we have that

$$   \limsup_{\epsilon \to 0}   \frac{\log     \mathcal{E}_{F,y}(   \frac{\rho}{3C} \epsilon^K   )    }{  - \log   \frac{\rho}{3C} \epsilon^K     }   \geq  \limsup_{\epsilon \to 0}   \frac{  \log  \mathcal{E}_{\sigma^+,\omega^+}(\epsilon)    }{   - \log   \frac{\rho}{3C} \epsilon^K   }  
=   \limsup_{\epsilon \to 0}   \frac{ \log   \mathcal{E}_{\sigma^+,\omega^+}(\epsilon)    }{   - \log   \frac{\rho}{3C} - K \log  \epsilon   }   = \infty.   $$

\end{proof}

\section{Hausdorff dimension}\label{sec:dim}

In this section we show that $  \dim_H  (\pi^+(E^+))   \geq   (d- d')\dim E^u$,  
for any  $$d' >   \frac{\delta + 6 \delta_1 d \dim E^u + (1-d)\sum_{i=1}^l(\chi_i -\chi_1) n_i}{\chi_1 \dim E^u}.  $$

Before we start on the proof of this claim, let us see how it implies Theorem A.

\begin{proof}[Proof of Theorem A]
We observe that by Theorem B, $\tilde{\tau}= \delta + 6 \delta_1 d \dim E^u$ can be made arbitrarily small 
and that ultimately $d'$ can be chosen arbitrarily close to $  \frac{ (1-d)\sum_{i=1}^l(\chi_i -\chi_1) n_i}{\chi_1 \dim E^u}.$
We then conclude that
$$  \dim_H(\mathcal{E}_f) \geq \dim(E^s) + \left(d-   \frac{ (1-d)\sum_{i=1}^l(\chi_i -\chi_1) n_i}{\chi_1 \dim E^u}\right)  \dim E^u    $$
$$ \geq     \dim(E^s) + \left(d-   \frac{ (1-d)(\chi_l -\chi_1) }{\chi_1 }\right) \dim E^u   \geq  \dim(E^s) +  \left( 1   - (1-d)\frac{\chi_l}{\chi_1}   \right)\dim E^u .   $$

\end{proof}

We prove the claim in the next two subsections. In \ref{sec:pressure} we establish some estimates related to the pressure of $\phi$. Then in \ref{sec:Haus-final} we use it to derive the final estimate on the Hausdorff dimension of $(\pi^+(E^+)$.

\subsection{Pressure estimates}\label{sec:pressure}
By Lemma A2.1  in \cite{pes97}, there exists a H\"older continuous function $\phi^+_{\mathbb{Z}}$ on $\mathcal{X}$, which is cohomologous to $S_L\phi\circ \pi$ and only depends on the positive side of the sequence.
By Proposition A2.2 in \cite{pes97}, denoting by $\phi^+$ the restriction of $\phi^+_{\mathbb{Z}}$ to $\mathcal{X}^+$, one has that 
$   P(\sigma, S_L\phi\circ \pi) = P(\sigma^+, \phi^+).  $
Let $E^+$ denote the set of points with high emergence for $\sigma^+$.
By Theorem 1.1 in \cite{NakZ}, $ P(\sigma^+_{| E^+}, \phi^+)= P(\sigma^+, \phi^+).$
We will use this to find a lower bound for $\dim_H (\pi^+  (E^+))$.

Let $d$ be as in Condition $\textbf{A1}$. Recall that $$    P(f_{|\Lambda_0},   - d   \log |    Df_{| E^u}| )  >   - \delta   ,$$
and let  $\tilde{\tau}= \delta + 6\delta_1 d \dim E^u. $
We choose $d'< d$ such that
\begin{equation}\label{def:d'}
d'>  \frac{\tilde{\tau} + (1-d)\sum_{i=1}^l(\chi_i -\chi_1) n_i}{\chi_1 \dim E^u}.
\end{equation}
We then have $d = d' + d''$ for some $d'' > 0.$
Since  $\phi^+_{\mathbb{Z}}$ is cohomologous to  $S_L\phi\circ \pi$, there exists a H\"older continuous function $u: Y \to \mathbb{R}$ such that
$$  \phi^+\circ (\pi^+)^{-1}(x) =    S_L\phi(x)  + u(F(x)) - u(x)  $$
for all $x\in Y$. Since $  P(f_{|\Lambda_0},   - d   \log |   \det  Df_{| E^u}| )>-\delta, $ then also \newline$  P(F_{|\pi^+(E^+)},    \phi^+\circ (\pi^+)^{-1}  )  > -\delta L$.
From the dimensional definition of topological pressure (see for example Section 11 in  \cite{pes97}) it follows that
\begin{align*}
\infty & = \lim_{N\to \infty}  \inf_{\mathcal{I}_F}  \sum_{(x,m)\in\mathcal{I}_F} \exp\left[  S_{F,m}     \phi^+\circ (\pi^+)^{-1} (x)    + \delta mL \right]\\
&=    \lim_{N\to \infty}  \inf_{\mathcal{I}_F}  \sum_{(x,m)\in\mathcal{I}_F} \exp\left[  S_{mL}     \phi(x) + u(f^{mL}(x))   - u(x)    + \delta mL \right]        ,  
\end{align*}
where the infimum is taken over all collections of pairs $(x,m)$ with $x\in Y, m\geq N$ such that $\pi^+(E^+) \subset \bigcup_{(x,m)\in\mathcal{I}_F} B_{F,m}(x,s)$ and $s>0$ is sufficiently small . 
The notation $S_{F,m}, B_{F,m}$ indicates that the summation and the Bowen balls are with respect to $F$. Recall that $S_m, B_m$ denote the summation and Bowen ball with respect to $f$.
By (\ref{eqn:had}), we continue

$$   \leq    \lim_{N\to \infty}  \inf_{\mathcal{I}}  \sum_{(x,m)\in\mathcal{I}} \exp\left[    \sum_{i=1}^{l}   (-\chi_i    d n_i   m)      +\delta_1 m d   \dim E^u     + U    + \delta m \right],        $$

where $U:= 2 \max_{x\in Y} |  u(x) |$ and the infimum is taken over all collections of pairs $(x,m)$ with $x\in Y, m\geq NL$ is a multiple of $L$, and such that $\pi^+(E^+) \subset \bigcup_{(x,m)\in\mathcal{I}} B_{m}(x,s)$.
For a fixed $N\in\mathbb{N}$ denote
$$   \zeta_N:=  \inf_{\mathcal{I}}  \sum_{(x,m)\in\mathcal{I}} \exp\left[    \sum_{i=1}^{l}   (-\chi_i    d n_i   m)      + \delta_1 m d   \dim E^u     + U    + \delta m \right].    $$

\subsection{Hausdorff dimension}\label{sec:Haus-final}
It is worth emphasizing that the definition of $\zeta_N$ above involves covers of the set  $\pi^+(E^+)$ by Bowen balls $B_m(x,s)$ with $m\to \infty$. In order to establish lower bound for the Hausdorff dimension of  $\pi^+(E^+)$ we need to consider covers by balls $B(x,s_m)$
- the balls with respect to the Riemannian metric on $M$ where the radius $s_m\to 0$. In what follows we will establish the relation between the two, which can be roughly explained as follows.
Given a Bowen ball  $B_m(x,s)$ we see that it has the smallest diameter in the direction of $E^l(x)$ and the biggest diameter in the direction of $E^1(x)$.
We can then enlarge it to the set $C_m(x,s)=B^1_m(x,s)\times B^2_{m_2}(x,s)\times \ldots \times B^l_{m_l}(x,s)$, where each $B^i_{m_i}(x,s)$ can be thought of as a \textit{'Bowen ball in the direction of'} $E^i$ (see (\ref{def:prod}) for precise definition),
and the numbers $m_i\in\mathbb{N}$ are chosen in such a way that the diameter of $C_m(x,s)$ in each direction is roughly the same, that is such sets can be used to approximate regular balls with respect to the Riemannian metric on $M$.
We will use covers by sets  $C_m(x,s)$ to estimate the Hausdorff dimension of $\pi^+(E^+)$.

In order to relate covers by sets $C_m(x,s)$ and covers by Bowen balls $B_m(x,s)$, we notice that we can cover each $C_m(x,s)$ with sets of the form $B^1_m(x',s)\times B^2_{m}(x',s)\times \ldots \times B^l_{m}(x',s)$
- those sets can be thought of as approximations of $B_m(x',s)$. We present a construction of such covers and in (\ref{eqn:refined}) we estimate their cardinality for each $C_m(x,s)$. Then in (\ref{eqn:zeta}) we use it to replace in $\zeta_N$ the covers by  Bowen balls $B_m(x,s)$  with covers by the sets $C_m(x,s)$. This eventually leads in (\ref{eqn:haus-final}) to our final estimate on the Hausdorff dimension of   $\pi^+(E^+)$.\\

We have the following foliated structure on $Y$ induced by "intermediate" unstable manifolds $V^1(x), \ldots, V^l(x)$ corresponding to distinct Lyapunov exponents:
$$ V^u(x)= V^1(x)  \supset  V^2(x)  \supset  \ldots  \supset   V^l(x),$$
with
$$  V^i(x):=  \{   y\in B(x, r(x)) ~ | ~    \limsup_{n\to \infty}  \frac{1}{n}  \log   d(f^{-n}(x), f^{-n}(y))   < - \chi_i  \}.   $$

By slightly abusing the notation, we will consider sets
$$  B^1_{m_1}(x,s) \times   \ldots \times B^l_{m_l}(x,s),    
\text{ where } B^i_{m_i}(x,s):= B_{m_i}(x,s)\cap V^i(x)$$
and $  B^1_{m_1}(x,s) \times   \ldots \times B^l_{m_l}(x,s)$ is defined as follows. 

We first denote

$$  T_l :=      B^l_{m_l}(x,s).  \text{ Then we define }  T_{l-1}:=   \bigcup_{x^{(l)}  \in T_l }        B^{l-1}_{m_{l-1}}(   x^{(l)},  s),    $$

$$ T_{l-2}:=   \bigcup_{x^{(l-1)}  \in T_{l-1} }        B^{l-2}_{m_{l-2}}(   x^{(l-1)},  s).    $$

Continuing in this manner we finally define $  B^1_{m_1}(x,s) \times   \ldots \times B^l_{m_l}(x,s)$ as

\begin{equation}\label{def:prod}
 B^1_{m_1}(x,s) \times   \ldots \times B^l_{m_l}(x,s)=  T_{1}:=   \bigcup_{x^{(2)}  \in T_{2} }        B^{1}_{m_{1}}(   x^{(2)},  s).       
\end{equation}

Observe that there exist $s_1, s_2>0$ such that denoting $$B^{\times}_m(x,s):=   B^1_{m}(x,s) \times   \ldots \times B^l_{m}(x,s) \text{  we have } $$

\begin{equation}\label{eqn:bx-relation}
             B_m(x, s_1)   \subset   B^{\times}_m(x,s)   \subset  B_m(x,s_2)      \text{  for every $m\in\mathbb{N}$ and $x\in Y$} .            
\end{equation}

Given $m\in\mathbb{N}$, and $x\in Y$, we set $m_i=m_i(m,x,s)$ for $i=2, \ldots, l$ such that 
\begin{equation}\label{eqn:diam}
  1   \leq  \frac{diam(      B^i_{m_i}(x,s)      )}{diam(    B^1_m(x,s)    )}  \leq     \max_{x\in \Lambda} || Df_{|E^u(x)}||.
\end{equation}

We observe that there exist uniform constants $c_1, c_2$ such that denoting $$C_m(x,s):=  B^1_{m}(x,s) \times  B^2_{m_2}(x,s) \times \ldots \times B^l_{m_l}(x,s)$$ one has that

\begin{equation}\label{eqn:C-relation}
 B(x, c_1   diam B^1_m(x,s))  \subset C_m(x,s)  \subset    B(x, c_2   diam B^1_m(x,s))   
\end{equation}
for every $ x\in Y$ and $m\in\mathbb{N}$.





Given $x\in Y$ and $m\in\mathbb{N}$ one can construct a cover of $C_m(x,s)$ by sets $B^{\times}_m(x',s)$ inductively as follows.

We first choose $\{   x^l_{j_l}   \} \subset  B^l_{m_l}(x,s)$ such that

$$  \bigcup_{j_l}     B^l_m(x^l_{j_l},s)      \supset  B^l_{m_l}(x,s).  $$

The elements  $\{   x^l_{j_l}   \}$ can be chosen in such a way that their number does not exceed
$  \left( \frac{diam B^l_{m_l}(x,s)}{1/3 \min_j diam    B^l_{m}(x^l_{j_l},s) }  \right)^{n_l} .$
By (\ref{eqn:diam}), we estimate

$$  \left( \frac{diam B^l_{m_l}(x,s)}{1/3 \min_j diam    B^l_{m}(x^l_{j_l},s) }  \right)^{n_l}  \leq  \left( \frac{diam B^1_{m}(x,s)   \max_{x\in\Lambda}||Df_{|E^u(x)}||}{1/3 \min_j diam    B^l_{m}(x^l_{j_l},s) }  \right)^{n_l} $$

$$  \leq      \left(\frac{\max_{x\in\Lambda}||Df_{|E^u(x)}||}{1/3 }\right)^{n_l}     \left( \frac{e^{(-\chi_1 +\delta_1)m}}{e^{(-\chi_l-\delta_1)m}}  \right)^{n_l}    \leq   e^{(\chi_l - \chi_1 + 3\delta_1)m n_l},$$

where in the last inequality we used the fact that $\frac{\max_{x\in\Lambda}||Df_{|E^u(x)}||}{1/3 } < e^{\delta_1 m}$ for large enough $m\in\mathbb{N}$.

Next, for each $x^l_{j_l}$ in the previous step, we find a collection

 $\{   x^{l-1}_{j_{l-1}}   \} \subset     B^{l-1}_{m_{l-1}}(x^l_{j_l},s)      $ such that

$$  \bigcup_{j_l}     \bigcup_{j_{l-1}}   B^{l-1}_m(x^{l-1}_{j_{l-1}},s)     \supset  T_{l-1}.  $$

The number of elements produced in this step (for each  $x^l_{j_l}$) can be estimated from above by

$$  \left( \frac{diam B^{l-1}_{m_{l-1}}(x,s)}{1/3 \min_j diam    B^{l-1}_{m}(x^{l-1}_{j_{l-1}},s) }  \right)^{n_{l-1}}    \leq   e^{(\chi_{l-1} - \chi_1 + 3\delta_1)m n_{l-1}}.$$






Continuing the construction in this manner we finally obtain that the set $ C_m(x,s)$ is covered by a union of no more than 

\begin{equation}\label{eqn:refined}
   \prod_i   e^{(\chi_i - \chi_1 + 3\delta_1)m n_i}  
\end{equation}

elements of the form  $B^{\times}_m(x^2,s)= B^1_{m}(x^2,s) \times  B^2_{m}(x^2,s) \times \ldots \times   B^l_m(x^2,s). $\\

We then continue with the estimate of $\zeta_N.$ Let $\mathcal{I}^{E}$ denote a cover of $\pi^+(E^+)$ by sets of the form $B^{\times}_m(x,s),$ where  $x\in Y$ and $ m\geq N$ is a multiple of $L$.
Let $\mathcal{I}^{S}$ denote a cover of $\pi^+(E^+)$ by sets of the form $C_m(x,s),$ where  $x\in Y$ and $ m\geq N$ is a multiple of $L$. By (\ref{eqn:bx-relation}), there exists a uniform constant $s'>0$ such that

$$ \zeta_N \leq  \inf_{\mathcal{I}^{E}} ~ s'   \sum_{B^{\times}_m(x,s)\in\mathcal{I}^E} \exp\left[    \sum_{i=1}^{l}   (-\chi_i    d n_i   m)      + \delta_1 m d   \dim E^u     + U    + \delta m \right] . $$
Observe that $s'e^U\leq e^{\delta_1 m}$ for large enough $m\in\mathbb{N}$. Then using (\ref{eqn:refined}) and denoting $\tau_1:=  \delta + 5\delta_1 d \dim(E^u) $ leads to 
\begin{equation}\label{eqn:zeta}
\zeta_N \leq     \inf_{\mathcal{I}^{S}}    \sum_{C_m(x,s)\in\mathcal{I}^S} \exp\left[    \sum_{i=1}^{l}   (-\chi_i    d n_i   m)      + \tau_1 m    +  \sum_{i=1}^l (\chi_i - \chi_1 )m n_i      \right] . 
\end{equation}
 
Writing $d= d' + d''$, where $d'$ was defined in (\ref{def:d'}),  we continue,
\begin{align*}
\zeta_N   &  \leq     \inf_{\mathcal{I}^{S}}    \sum_{C_m(x,s)\in\mathcal{I}^S}     \exp\left[      -\chi_1 d'' \dim E^u m       -\chi_1 d' \dim E^u m   + \tau_1 m + (1-d)m  \sum_{i=1}^l (\chi_i - \chi_1 ) n_i    \right]   \\
&\leq     \inf_{\mathcal{I}^{S}}    \sum_{C_m(x,s)\in\mathcal{I}^S}     diam B^1_m(x,s)^{d'' \dim E^u}    \exp\left[      -\chi_1 d' \dim E^u m   + \tilde{\tau} m + (1-d)m  \sum_{i=1}^l (\chi_i - \chi_1 ) n_i    \right] ,  
\end{align*}

In addition, by the choice of $d',$ we have that 

$$     \exp\left[      -\chi_1 d' \dim E^u m   + \tilde{\tau} m + (1-d)m  \sum_{i=1}^l (\chi_i - \chi_1 ) n_i    \right] < 1.  $$

Using this and (\ref{eqn:C-relation}), we obtain that for sufficiently large $N$,



\begin{equation}\label{eqn:haus-final}
\inf_{\{  B(x,\epsilon) \}    } \sum  diam(B(x,\epsilon))^{d'' \dim E^u} >     \zeta_N     ,    
\end{equation}

where the infimum is taken over all collections of open balls $B(x,\epsilon)$ with $\epsilon < \bar{\epsilon}_N$ covering $\pi^+(E^+)$, where $ \bar{\epsilon}_N= c_1\min_{x\in Y} diam( B_{N+L}^1(x,s)  )$.
By the definition of the Hausdorff dimension we conclude that $\dim_H(\pi^+(E^+) )  \geq d'' \dim E^u.  $


\begin{thebibliography}{99}

\bibitem{ABC2011}
\textsc{F.~Abdenur, C.~Bonatti, S.~Crovisier},
\textit{Nonuniform hyperbolicity for $C^1$-generic diffeomorphisms},  
Israel Journal of Mathematics
\textbf{183} (2011), 
1--60.

%
%
%
%
%
%
%
\bibitem{AP2019}
\textsc{V.~Ara\'ujo, V.~Pinherio}, 
\textit{Abundance of wild historic behavior}, 
Bulletin of the Brazilian Mathematical Society, New Series
 (2019), 
 1--36.


\bibitem{ACW} 
A. Avila, S. Crovisier, and A. Wilkinson, \emph{$C^1$ density of stable ergodicity}, Advances in Mathematics, (2021) 379:107496.

\bibitem{BKNSR2020}
\textsc{P.~Barrientons, S.~Kiriki, Y.~Nakano, A.~Raibekas, T.~Soma}, 
\textit{Historic behavior in non-hyperbolic homoclinic classes}, 
Proceedings of the American Mathematical Society
\textbf{148} (2020),
1195--1206.


\bibitem{BS2000}
\textsc{L.~Barreira, J.~Schmeling}, 
\textit{Sets of ``non-typical'' points have full topological entropy and full Hausdorff dimension}, 
Israel Journal of Mathematics
\textbf{116} (2000), 
29--70.





\bibitem{Berger2017}
\textsc{P.~Berger}, 
\textit{Emergence and non-typicality of the finiteness of the attractors in many topologies}, 
Proceedings of the Steklov Institute of Mathematics
\textbf{297} (2017), 
1--27.

%
%
 \bibitem{Berger2020}
\textsc{P.~Berger}, 
\textit{Complexities of differentiable dynamical systems}, 
Journal of Mathematical Physics
 (2020).

 \bibitem{BBi}
\textsc{P.~Berger, S.~Biebler}, 
\textit{Emergence of wandering stable components}, 
arXiv preprint arXiv:2001.08649
 (2020).

\bibitem{BBo}
\textsc{P.~Berger, J.~Bochi}, 
\textit{On Emergence and Complexity of Ergodic Decompositions}, 
Advances in Mathematics. 2021 ; Vol. 390.
%
%

\bibitem{BV2017}
\textsc{T.~Bomfim, P.~Varandas},
\textit{Multifractal analysis for weak Gibbs measures: from large deviations to irregular sets},
Ergodic Theory and Dynamical Systems
\textbf{37} (2017), 
79--102.

\bibitem{CZZ2011}
\textsc{Y.~Cao, L.~Zhang, Y.~Zhao},
\textit{The asymptotically additive topological pressure on the irregular set for asymptotically additive potentials},
Nonlinear Analysis: Theory, Methods \& Applications
\textbf{74} (2011), 
5015--5022.

\bibitem{CTV2015}
\textsc{E.~Catsigeras, X.~Tian, E.~Vargas},
\textit{Topological entropy on points without physical-like behaviour},
Mathematische Zeitschrift
(2015), 
1--13.
%
\bibitem{CKS2005}
\textsc{E.~Chen, T.~K{\"u}pper, L.~Shu},
\textit{Topological entropy for divergence points},
Ergodic Theory and Dynamical Systems
(2005),
1173--1208.


\bibitem{CJZ}
\textsc{E.~Chen, Y.~Ji, X.~Zhou},
\textit{Entropy and Emergence of Topological Dynamical
Systems},
arXiv preprint arXiv:2005.01548
(2020).

\bibitem{Chung}
Y. M. Chung, \emph{Shadowing properties of non-invertible maps with hyperbolic measures},
Tokyo J. Math. 22 (1999), 145–166.



\bibitem{FP2011}
\textsc{D.~F{\"a}rm, T.~Persson},
\textit{Large intersection classes on fractals},
Nonlinearity
\textbf{24} (2011),
1291--1309.

\bibitem{Gel}
K. Gelfert, \emph{Horseshoes for diffeomorphisms preserving hyperbolic measures}, Math. Z., 283:685–701, 2016.

\bibitem{HK1990}
\textsc{F.~Hofbauer, G.~Keller},
\textit{Quadratic maps without asymptotic measure},
Communications in mathematical physics
\textbf{127} (1990), 
319--337.
 
 
\bibitem{Katok1980}
\textsc{A.~Katok},
\textit{Lyapunov exponents, entropy and periodic orbits for diffeomorphisms},
Publications Math{\'e}matiques de l'IH{\'E}S
\textbf{51} (1980),
137--173.
%
%
%


\bibitem{Kat}
A. Katok, B. Hasselblatt, \emph{Introduction to the Modern Theory of Dynamical Systems}, {\bf 54}, Encyclopedia of Mathematics and its Applications, Cambridge University Press, 1995.


 \bibitem{KNS2019}
      \textsc{S.~Kiriki, Y.~Nakano, T.~Soma},
      \textit{Emergence via non-existence of averages},
arXiv preprint arXiv:1904.03424
  (2019).


%
\bibitem{KS2017}
\textsc{S.~Kiriki, T.~Soma},
\textit{Takens' last problem and existence of non-trivial wandering domains},
Advances in Mathematics
\textbf{306} (2017), 
524--588.


\bibitem{LY}
F. Ledrappier, L.-S. Young, \emph{The Metric Entropy of Diffeomorphisms: Part II: Relations between Entropy, Exponents and Dimension}, Annals of Mathematics 122, no. 3 (1985): 540–74. 


\bibitem{Luz}
S. Luzzatto, F. J. S\'anchez-Salas, \emph{Uniform hyperbolic approximation of measures with
non-zero Lyapunov exponents}, Proc. Amer. Math. Soc. 141 (2013), 3157–3169.

\bibitem{Manning}
McCluskey, H., Manning, A. \emph{Hausdorff dimension for horseshoes}, Ergodic Theory and Dynamical Systems, 3(2), 251-260, (1983).


\bibitem{Mendoza}
L. Mendoza, \emph{Ergodic attractors for diffeomorphisms of surfaces}, J. London Math. Soc. (2) 37 (1988) 362-374.

\bibitem{Mis}
M. Misiurewicz and W. Szlenk, \emph{Entropy of piecewise monotone mappings}, Studia
Math. 67 (1980), 1, 45–63.

\bibitem{NakZ}
Y. Nakano, A. Zelerowicz \emph{Highly irregular orbits for subshifts of finite type: large intersections and emergence},  2021 Nonlinearity \textbf{34} 7609.

\bibitem{Per}
T. Persson and J. Schmeling, \emph{Dyadic diophantine approximation and Katok’s horseshoe approximation}, Acta Arith. 132 (2008), 205–230.	

\bibitem{pes97}
Ya. Pesin, 
Dimension theory in dynamical systems: Contemporary Views and Applications, 
University of  Chicago  Press, 
1997.


\bibitem{pesRed}
L. Barreira, and Y. Pesin,  \emph{Nonuniform Hyperbolicity: Dynamics of Systems with Nonzero Lyapunov Exponents} (Encyclopedia of Mathematics and its Applications).  (2007). Cambridge: Cambridge University Press.

\bibitem{PP84}
\textsc{Y.~Pesin, B.~Pitskel'}, 
\textit{Topological pressure and the variational principle for noncompact sets}, 
Functional Analysis and its Applications
\textbf{18} (1984), 
307--318. 

\bibitem{Przyt}
F. Przytycki and M. Urba\'nski, \emph{Conformal Fractals – Ergodic Theory Methods}, Cambridge University Press, 2010.

\bibitem{Ruelle2001}
\textsc{D.~Ruelle},
\textit{Historical behaviour in smooth dynamical systems},
Global Analysis of Dynamical Systems 
(eds. H.~W.~Broer et al),
Institute of Physics Publishing
(2001), 
63--66.

\bibitem{Sanchez}
F. S\'anchez-Salas \emph{Ergodic attractors as limits of hyperbolic horseshoes}, (2002)  Ergodic Theory and Dynamical Systems, 22(2), 571-589.

\bibitem{Takens2008}
\textsc{F.~Takens},
\textit{Orbits with historic behaviour, or non-existence of averages},
Nonlinearity
\textbf{21}  (2008), 
33--36.

 \bibitem{Talebi}
      \textsc{A.~Talebi},
      \textit{Non-statistical rational maps},
arXiv preprint arXiv:2003.02185
  (2020).

\bibitem{Thompson2010}
\textsc{D.~Thompson},
\textit{The irregular set for maps with the specification property has full topological pressure},
Dynamical Systems
\textbf{25} (2010), 
25--51.

\bibitem{Thompson2012}
\textsc{D.~Thompson},
\textit{Irregular sets, the $\beta$-transformation and the almost specification property},
Transactions of the American Mathematical Society
\textbf{364} (2012), 
5395--5414.

\bibitem{Tian2017}
\textsc{X.~Tian},
\textit{Topological pressure for the completely irregular set of Birkhoff averages},
Discrete \& Continuous Dynamical Systems-A
\textbf{37} (2017), 
2745--2763.





\bibitem{Yang2020}
\textsc{D.~Yang},
\textit{On the historical behavior of singular hyperbolic attractors},
Proceedings of the American Mathematical Society
\textbf{148} (2020), 
1641--1644.

\bibitem{Yang}
Y. Yang, \emph{Horseshoes for $C^{1+\alpha}$ mappings with hyperbolic measures}, Disc. Contin. Dynam.
Sys., 35: 5133–5152, 2015.


%
%


\end{thebibliography}
\end{document}